\documentclass[10pt]{article}
\usepackage{graphicx} 
\usepackage{amsmath}
\usepackage{amsthm}
\usepackage{amssymb}
\usepackage{geometry}
\usepackage{esint}
\usepackage{hyperref}
\usepackage{mathrsfs}
\usepackage{graphicx}
\usepackage{color}
\usepackage{verbatim}
\usepackage{booktabs}
\usepackage{siunitx}
\usepackage{float}
\usepackage{multirow}
\usepackage{algorithm}
\usepackage{algpseudocode}
\usepackage{float}
\usepackage{threeparttable}
\usepackage{booktabs}
\theoremstyle{definition}
\newtheorem{theorem}{Theorem}[section]

\newtheorem{corollary}{Corollary}[section]
\newtheorem{lemma}{Lemma}[section]
\newtheorem{remark}{Remark}[section]
\newtheorem{example}{Example}[section]

\newcommand{\RR}{\mathbb{R}}

\renewcommand{\d}{\,\mathrm{d}}

\newcommand{\p}{\partial}
\numberwithin{equation}{section}

\DeclareMathOperator{\supp}{\mathrm{supp}\,}

\title{Identification of a Point Source in the Heat Equation from Sparse Boundary Measurements\thanks{The work of B. Jin is supported by Hong Kong RGC General Research Fund (Projects 14306423 and 14306824),
ANR / RGC Joint Research Scheme (A-CUHK402/24) and a start-up fund from The Chinese University of Hong
Kong. The work of Y. Kian is supported by the French National Research Agency ANR and Hong Kong RGC Joint
Research Scheme for the project IdiAnoDiff (grant ANR-24-CE40-7039).}}
\author{Fangyu Gong\thanks{Department of Mathematics, The Chinese University of Hong Kong, Shatin, N.T., Hong Kong, P. R. China. (\texttt{1155191587@link.cuhk.edu.hk, b.jin@cuhk.edu.hk, 1155228869@link.cuhk.edu.hk})} \and Bangti Jin\footnotemark[2] \and Yavar Kian\thanks{Univ Rouen Normandie, CNRS, Normandie Univ, LMRS UMR 6085, F-76000 Rouen, France (\texttt{yavar.kian@univ-rouen.fr})} \and Sizhe Liu\footnotemark[2]}
\date{}

\begin{document}

\maketitle

\begin{abstract}
In this work we investigate the inverse problem of recovering one point source in the heat equation from sparse boundary measurement, i.e., the flux data at several points on the boundary. We prove the unique recovery of the location and piecewise constant in time amplitude when the domain is the unit ball in $\mathbb{R}^d$ ($d\geq2$), and the unique recovery of the location and compactly supported amplitude when the domain is simply connected, smooth and bounded in $\mathbb{R}^2$, under mild conditions on the observational points. The proof combines distinct analytical tools, including the representation of the flux data via Laplacian eigenfunctions on the unit ball, a detailed analysis of the properties of the heat and Poisson kernels, as well as methods drawn from complex analysis. Further we present several numerical experiments to illustrate the feasibility of the recovery from sparse boundary data.\\
{\textbf{Key words}: point source identification, heat equation, sparse boundary measurement, Laplacian eigenfunction on unit ball, Poisson kernel.}
\end{abstract}

\section{Introduction}
Let $\Omega \subset \mathbb{R}^d$ ($d\ge 2$) be an open bounded, simply connected domain with a smooth boundary $\partial\Omega$. Consider the following initial boundary value problem for the function $u$:
\begin{equation}\label{eqn:heat}
    \left\{\begin{aligned}
    \partial_tu-\Delta u &=F,\quad \mbox{in }\Omega\times (0,T),\\
    u&=0,\quad \mbox{on }\partial\Omega\times(0,T),\\
    u & = 0,\quad \mbox{in } \Omega\times\{0\},
\end{aligned}\right.
\end{equation}
where the source $F(x,t)$ takes the form
\begin{equation}\label{eqn:source}
F(x,t)=g(t)\delta(x-p),\quad \mbox{with } g(t)\in L^2(0,T).
\end{equation}
This equation describes the diffusion of heat in the domain $\Omega$. In this work,
we study the inverse source problem of recovering the information about the source $F$ (i.e., the location $p$ and amplitude $g(t)$) from the observation of the flux $\partial_{\nu}u$ at finitely many points $\{z_\ell\}_{\ell=1}^L$ on the boundary $\partial\Omega$ along the time interval $(0,T)$. The boundary measurements at finitely many points are called sparse. Inverse source problems for the heat equation \eqref{eqn:heat} are motivated by various application in different scientific fields as well as industrial applications. For example, in environmental protection, the goal is to identify  a source of pollution $F$ from sparse measurement of the flux, and pollution source detection plays a vital role in pollution management, including air quality forecasting and groundwater contamination studies etc \cite{GK,HJL}.

There is an extensive literature on the inverse source problem for the heat equation \eqref{eqn:heat}. These works can be grouped into two classes. One group of works focuses on distributed sources (see, e.g., \cite{Cannon:1968,HettlichRundell:2001,HuHuangJin:2025,HuangYamamoto:2020} for an incomplete list). The second group focuses the case of point sources, which has received a lot of attention in the last twenty years, due to its significant practical application in pollution detection; see, e.g., the works \cite{AndrleElBadia:2012,ElBadiaHaDuong:2002,ElBadiaHamdi:2005,HuangJinKian:2025,KomornikYamamoto:2005,LingYamamoto:2006,SunWang:2025}. These works have established the uniqueness of the inverse problem in one- and multi-dimensional cases, and have also proposed several effective numerical schemes. One common strategy for constructing the reconstruction is based on suitable regularized formulations  \cite{AndrleElBadia:2012,GuZhangZhang:2025}. There are also non-iterative methods, e.g.,  reciprocity gap functional and approximate controllability \cite{ElBadiaHaDuong:2002,QiuWangYu:2025} and range test \cite{SunWang:2025}. However, in the multi-dimensional case, most of these important works require the knowledge of flux data over the whole boundary $\partial\Omega$ or its subset. Note that in practice, the measurement is often collected by sensors, which are inherently finite. Thus it is very natural to ask whether one can reduce the amount of the boundary measurement to the observations at finitely many points on the boundary $\partial\Omega$ while still maintains the uniqueness of the recovery. The investigation of the setting is of enormous practical significance.

For a simply  connected smooth subdomain $\omega$ of the unit disc $D$ in $\mathbb{R}^2$, Hettlich and Rundell \cite[Theorem 3.1]{HettlichRundell:2001} and proved that the flux observations at two distinct points can uniquely determine the subdomain $\omega$ in the source $F(x,t)=\chi_\omega (x)$ (with $\chi_\omega$ being the characteristic function of $\omega$). The analysis relies on an explicit representation of the boundary flux in terms of the eigensystem of the negative Laplacian on the unit disc $D$. This seminal work \cite{HettlichRundell:2001} was extended and refined in several subsequent works on distributed sources, including semi-discrete in time sources and the time-fractional model etc. \cite{LiZhang:2020,LinZhangZhang:2022,RundellZhang:2020}.
Recently, Gu et al \cite{GuZhangZhang:2025} established that on the unit disc $D$, the flux data at two distinct points on the boundary $\partial D$ can uniquely determine the location $p$ of the point source of the form $g\delta(x-x_0)$, for a known constant $g$. The case of point sources is more involved since the global regularity of the solution $u$ to problem \eqref{eqn:heat} is very limited, and the derivation of the  expression of the boundary flux is more delicate. To resolve the technical challenge, Gu et al \cite{GuZhangZhang:2025} resorted to an approximate Dirac $\delta$ function and a limiting process to obtain a representation of the flux data. Note that all these works focus on the unit disc $D$ in $\mathbb{R}^2$.

In all previously reported results, the analysis has been confined to a  disc $D\in \mathbb{R}^2$, and only the uniqueness of the recovery of time-dependent sources of the form~\eqref{eqn:source}, with $g$ being piecewise constant and known, has been established. The objective of the present work is to extend the existing literature on sparse boundary measurements for the heat equation in the following four aspects: (i) the unique recovery of point sources in higher spatial dimensions, (ii) simultaneous recovery of the amplitude $g$ and the location $p$ of  sources of the form~\eqref{eqn:source}, (iii) the recovery of more general time-dependent point sources of the form~\eqref{eqn:source}, where the amplitude $g$ is not assumed necessarily piecewise constant, and (iv) analogous results for domains that are not restricted to the unit disc. The analysis is based on combining several novel arguments, including explicit representations of solutions to problem~\eqref{eqn:heat}, refined properties of the heat and Poisson kernels, and geometric considerations, notably the application of the Kellogg--Warschawski theorem. The theoretical analysis will be supplemented by numerical simulations.

In this work, we revisit the inverse problem of recovering one point source $F(x,t)$ of the form \eqref{eqn:source}, in which the amplitude $g(t)$ is allowed to be time-dependent, when compared with the work \cite{GuZhangZhang:2025}. More precisely, for the amplitude $g(t)$, we consider the following two admissible classes:
\begin{align*}
\mathcal{A}_{\rm pwc}&=\left\{g:g=\sum_{k=1}^{K}q_k\chi_{(t_{k-1},t_k]},\ t_0=0,\ t_K=T,\ q_k\mbox{ distinct},\ g(0)=q_1\neq 0.\right\},\\
\mathcal{A}_c&=\left\{g\in L^2(0,T):\supp(g)\subset [0,T_1]\mbox{ with }T_1<T,\ \int_0^{T}g\mathrm{d}t\neq 0\right\}.
\end{align*}
Note that different elements in the set $\mathcal{A}_{\rm pwc}$ can have different partitions $\{t_k\}_{k=1}^K$ and different $K$.

In this work, we obtain two new uniqueness results. The first result is about the unit ball in $\mathbb{R}^d$, $d\geq 2$, and a piecewise constant in time amplitude $g(t)$.
\begin{theorem}\label{thm:ball}
Let $\Omega\subset \RR^d$ be the unit ball with $d\geq 2$, and $g,\widetilde{g}\in \mathcal{A}_{\rm pwc}$. Let $u$ and $\widetilde{u}$ solve problem \eqref{eqn:heat} with the
 source $F=g(t)\delta(x-p)$ and $\widetilde{F}=\widetilde{g}(t)\delta(x-\widetilde{p})$, respectively. Suppose that $\partial_{\nu}u(z_{\ell},t)=\partial_\nu \widetilde{u}(z_{\ell},t)$, for any $t\in (0,T)$, at the points $z_{\ell}\in \mathbb{S}^{d-1}$, $ \ell=1,\ldots,d$. If the vectors $\{z_\ell\}_{\ell=1}^d$ are linearly independent,  then $g=\widetilde{g}$  and $p=\widetilde{p}$.
\end{theorem}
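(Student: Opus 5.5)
We would start from the eigenfunction representation of the flux. Let $\{(\lambda_n,\varphi_n)\}$ be the Dirichlet eigenpairs of $-\Delta$ on the unit ball $B$, with $\varphi_n$ orthonormal in $L^2(B)$. Then formally
\begin{equation*}
u(x,t)=\sum_n \varphi_n(p)\varphi_n(x)\int_0^t e^{-\lambda_n(t-s)}g(s)\d s .
\end{equation*}
Since $g$ is piecewise constant, the time integral is explicit. On the first interval $(0,t_1]$ it equals $q_1(1-e^{-\lambda_n t})/\lambda_n$, so for $t\in(0,\min(t_1,\widetilde t_1))$
\begin{equation*}
\partial_\nu u(z,t)=q_1\sum_n \frac{\varphi_n(p)\partial_\nu\varphi_n(z)}{\lambda_n}\bigl(1-e^{-\lambda_n t}\bigr).
\end{equation*}
The series must be justified by approximating $\delta$ by smooth mollifiers and passing to the limit, as in \cite{GuZhangZhang:2025}, which works for $p$ in the interior. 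Grouping terms by distinct eigenvalues $\mu_j$ gives a Dirichlet series $\sum_j c_j(z,p)e^{-\mu_j t}$. The $t$-independent part is $\sum_n\varphi_n(p)\partial_\nu\varphi_n(z)/\lambda_n=\partial_\nu G(z,p)$, i.e.\ minus the Poisson kernel $P(z,p)$. Equality of the data on an interval then gives, by analyticity in $t>0$ and the uniqueness of Dirichlet-series coefficients (the $\mu_j$ are distinct and increasing, with convergence justified for $t>0$), two conclusions: $q_1P(z_\ell,p)=\widetilde q_1P(z_\ell,\widetilde p)$, and $q_1\sum_{\lambda_n=\mu_j}\varphi_n(p)\partial_\nu\varphi_n(z_\ell)=\widetilde q_1\sum_{\lambda_n=\mu_j}\varphi_n(\widetilde p)\partial_\nu\varphi_n(z_\ell)$ for every $j$.

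On the ball the eigenfunctions are $r^{1-d/2}J_{k+d/2-1}(\sqrt{\lambda}r)Y_k(\theta)$. We use the lowest eigenvalue $\mu_1$, which is simple and has a radial eigenfunction $\phi_1(r)$, positive in the interior, with $\partial_\nu\phi_1$ a nonzero constant on the sphere. Its coefficient gives $q_1\phi_1(|p|)=\widetilde q_1\phi_1(|\widetilde p|)$. Next take the first eigenvalue of angular order $k=1$, namely $\lambda=j_{d/2,1}^2$. By the classical Bourget/Siegel result this is not an eigenvalue with any other angular order (no shared Bessel zeros), so it is assumed distinct from all the others. Its eigenspace is $\{R(r)\,x\cdot a/r\}$, and the addition formula turns the coefficient into $c\,R(|p|)\,(p\cdot z_\ell)/|p|$. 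Hence
\begin{equation*}
q_1R(|p|)\frac{p\cdot z_\ell}{|p|}=\widetilde q_1R(|\widetilde p|)\frac{\widetilde p\cdot z_\ell}{|\widetilde p|},\qquad \ell=1,\dots,d.
\end{equation*}
Here $R(r)=r^{1-d/2}J_{d/2}(\sqrt\lambda r)$, with $R(r)/r$ positive on $[0,1)$. Since the $z_\ell$ are linearly independent, this yields the vector identity $q_1\frac{R(|p|)}{|p|}p=\widetilde q_1\frac{R(|\widetilde p|)}{|\widetilde p|}\widetilde p$. So $p$ and $\widetilde p$ are parallel, with a sign determined by the sign of $q_1/\widetilde q_1$. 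The same argument applies to higher radial modes of order $k=1$: it gives $q_1h_m(|p|)=\widetilde q_1h_m(|\widetilde p|)$ for all $m$, where $h_m(r)=r^{-d/2}J_{d/2}(j_{d/2,m}r)$ (modulo constants), together with the radial modes $q_1\phi_m(|p|)=\widetilde q_1\phi_m(|\widetilde p|)$.

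The main obstacle is deducing $|p|=|\widetilde p|$ and $q_1=\widetilde q_1$ from these ratio identities. We would use the radial equalities $\phi_m(|p|)/\phi_m(|\widetilde p|)=\widetilde q_1/q_1$ for all $m$. Let $\rho=|p|$ and $\widetilde\rho=|\widetilde p|$. Then $\sum_m\phi_m(\rho)\phi_m(s)e^{-\mu_m t}$ is the radial heat kernel $K_t(\rho,s)$, so $q_1K_t(\rho,s)=\widetilde q_1K_t(\widetilde\rho,s)$ for all $s$ and $t$. Letting $t\to0$ with $s=\rho$, the left side blows up like $t^{-d/2}$ (heat-kernel concentration) while the right side decays if $\widetilde\rho\neq\rho$. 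This forces $\rho=\widetilde\rho$, and then $q_1=\widetilde q_1$. The case $p=0$ is handled separately, since then the order-$1$ coefficients vanish. From the vector identity we then get $p=\widetilde p$.

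Finally, to recover $g$ we proceed interval by interval. With $p=\widetilde p$ fixed, the flux is a Volterra convolution $\int_0^t k(t-s)g(s)\d s$ with $k(t)=\sum_n\varphi_n(p)\partial_\nu\varphi_n(z_1)e^{-\lambda_n t}$, which is real-analytic for $t>0$ and not identically zero. Titchmarsh's convolution theorem then gives $g=\widetilde g$ on $(0,T)$. Alternatively, we can match the analytic pieces between consecutive breakpoints to identify $t_k$ and $q_k$ inductively.
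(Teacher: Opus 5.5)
Your proposal is correct, and its skeleton is the paper's. On $(0,\min\{t_1,\widetilde t_1\})$ the data reduce to $q_1\partial_\nu K_\Omega(z_\ell,p,t)=\widetilde q_1\partial_\nu K_\Omega(z_\ell,\widetilde p,t)$. Uniqueness of Dirichlet-series coefficients then gives the radial identities $q_1 j_0(\alpha_{0l}|p|)=\widetilde q_1 j_0(\alpha_{0l}|\widetilde p|)$ and the order-one identities. Linear independence of the $z_\ell$ fixes the direction; your addition-formula computation is the paper's invertibility argument with the matrices $M$ and $A$ in another form.

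The genuine differences lie in two sub-steps.

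\textbf{Radial distance.} To get $|p|=|\widetilde p|$, the paper expands a bump function $f$ with $f(|p|)=1$, $f(|\widetilde p|)=0$ in a Fourier--Bessel series and uses pointwise convergence (Lemma~\ref{lem:Bessel}(iv)). You instead resum the radial identities into the spherically averaged heat kernel and compare small-time behaviour. This is valid, but it needs an interior Gaussian lower bound for $K_\Omega$, which you should state. Also, for $|p|>0$ the diagonal blow-up is only of order $t^{-1/2}$, because you average over a $(d-1)$-sphere, not $t^{-d/2}$. The conclusion is unaffected.

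\textbf{Amplitude.} The paper runs a first-discrepancy argument that again uses piecewise constancy and $j_0(\alpha_{01}|p|)\neq 0$. You apply Titchmarsh's convolution theorem to the kernel $\partial_\nu K_\Omega(z_1,p,\cdot)$, which is continuous on $[0,T]$, real-analytic on $(0,\infty)$, and not identically zero. This is shorter and proves more: once $p$ is known, $g$ is determined in $L^1(0,T)$ from a single sensor. Piecewise constancy is then needed only to isolate $q_1\neq 0$ on the first interval.

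You are also more careful than the paper on one point. The multiplicity claims (radial eigenvalues simple, order-one eigenvalues of multiplicity exactly $d$) rest on the Bourget--Siegel non-coincidence of Bessel zeros of different orders, which the paper uses tacitly.

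Two minor remarks. First, splitting the flux into the constant $\partial_\nu G(z,p)$ plus a Dirichlet series is delicate, since $\sum_n\varphi_n(p)\partial_\nu\varphi_n(z)/\lambda_n$ need not converge absolutely. It is cleaner to differentiate in $t$ first, as the paper does, and the Poisson-kernel identity is never used later anyway. Second, the mollifier limit can be avoided by using the smooth extension of $K_\Omega(\cdot,p,\cdot)$ away from $p$ given in Lemma~\ref{heat_kernel}(iv).
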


The following corollary specializes the result to the two- and three-dimensional cases. The notation $\mathbb{Z}$ and $\mathbb{Q}$ denote the sets of integers and rational numbers, respectively.
\begin{corollary}\label{cor:disc-pwc}
Let the conditions of Theorem \ref{thm:ball} hold.
\begin{itemize}
    \item[{\rm(i)}] $\Omega$ is the unit disc, and the two points $\{e^{i\theta_{\ell}}\}_{\ell=1}^2\subset\partial\Omega$ satisfy $\theta_2-\theta_1\notin \mathbb{Z}\pi$. Then $g=\widetilde{g}$ and $p=\widetilde{p}$.
    \item[{\rm(ii)}] $\Omega$ is the unit ball in $\mathbb{R}^3$, the three points $\{z_{\ell}\}_{\ell=1}^3$ and the origin are not coplanar. Then $g=\widetilde{g}$ and $p=\widetilde{p}$.
\end{itemize}
\end{corollary}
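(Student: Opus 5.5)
The corollary follows from Theorem \ref{thm:ball} once we check that, in each case, the geometric hypothesis on the observation points is equivalent to the linear independence of the vectors $\{z_\ell\}_{\ell=1}^d$ in $\RR^d$. The plan is therefore to verify this equivalence separately for $d=2$ and $d=3$, and then invoke Theorem \ref{thm:ball} directly. No analytic argument beyond the theorem is needed.

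For part (i), identify $e^{i\theta_\ell}$ with the vector $z_\ell=(\cos\theta_\ell,\sin\theta_\ell)\in\mathbb{S}^1$. The two vectors are linearly independent if and only if
\begin{equation*}
\det\begin{pmatrix}\cos\theta_1 & \cos\theta_2\\ \sin\theta_1 & \sin\theta_2\end{pmatrix}=\sin(\theta_2-\theta_1)\neq 0 .
\end{equation*}
This holds exactly when $\theta_2-\theta_1\notin\mathbb{Z}\pi$. Geometrically, the two points are neither equal nor antipodal. Theorem \ref{thm:ball} with $d=2$ then gives $g=\widetilde g$ and $p=\widetilde p$.

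For part (ii), three vectors $z_1,z_2,z_3\in\RR^3$ are linearly dependent if and only if they lie in a common two-dimensional subspace, i.e., a plane through the origin. Such a plane exists if and only if the points $z_1,z_2,z_3$ and $0$ are coplanar. Equivalently, $\det(z_1,z_2,z_3)$, which is six times the signed volume of the tetrahedron with vertices $0,z_1,z_2,z_3$, vanishes exactly in the coplanar case. One degenerate case needs a remark: if two of the $z_\ell$ coincide or are antipodal, the four points are automatically coplanar, so the hypothesis excludes it consistently. Hence non-coplanarity is precisely linear independence, and Theorem \ref{thm:ball} with $d=3$ applies.

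There is no real obstacle here. The only care required is stating the equivalences precisely, in particular that the coplanarity in (ii) is coplanarity with the origin and not merely among the three points, since three points are always coplanar.
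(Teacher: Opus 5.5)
Your proposal is correct and matches how the paper obtains this corollary: the paper gives no separate proof and treats (i) and (ii) as the $d=2$ and $d=3$ instances of Theorem~\ref{thm:ball}, with the geometric conditions being restatements of the linear independence of $\{z_\ell\}$. Your checks via $\det(z_1,z_2)=\sin(\theta_2-\theta_1)$ in the plane, and via the characterization in $\mathbb{R}^3$ of linear dependence as lying in a common plane through the origin, supply exactly the verification the paper leaves implicit.
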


Corollary \ref{cor:disc-pwc}(i) generalizes the recent result of Gu et al \cite[Theorem 1.1]{GuZhangZhang:2025}, in which the authors proved that the location $p$ in the unit disc $D$ can be uniquely determined by the given observational data. The generalization lies in the piecewise constant temporal component, which is also uniquely determined. Moreover, the condition on the measurement points $\{z_\ell=e^{i\theta_\ell}\}_{\ell=1}^2$ is significantly relaxed from $\theta_1-\theta_2\not\in \mathbb{Q}\pi$ to $\theta_1-\theta_2\not\in\mathbb{Z}\pi$. Note that the condition $\theta_1-\theta_2\not\in \mathbb{Q}\pi$ was also imposed in the works \cite{HettlichRundell:2001,RundellZhang:2020,LiZhang:2020}. The extension is achieved by a more refined analysis of the Dirichlet series expansion of the flux data. Corollary \ref{cor:disc-pwc}(ii) is the $3$D version of (i), which appears to be new in the context of sparse boundary measurements, to the best of our knowledge.  The analysis employs the representation of the boundary flux data via eigenfunction expansion and Dirichlet heat kernel, and properties of the leading eigenfunctions on the unit ball in $\mathbb{R}^d$. In  contrast, the analysis in \cite{GuZhangZhang:2025} utilizes an approximation of the Dirac delta function, which is more involved.

The next result treats the case of a compactly supported amplitude in the two-dimensional case, possibly a general smooth bounded domain.
\begin{theorem}\label{thm:compact}
Let $\Omega\subset \mathbb{R}^2$ be a simply-connected smooth bounded domain, and $g(t),\widetilde {g}(t)\in\mathcal{A}_c$. Let $u$ and $\widetilde u$ solve problem \eqref{eqn:heat} with the source $F=g(t)\delta(x-p)$ and $\widetilde F=\widetilde g(t)  \delta(x-\widetilde p)$, respectively. Then the following two statements hold.
\begin{itemize}
\item [{\rm(i)}] Suppose  that $g=\widetilde{g}\in \mathcal{A}_c$  is known and that $\Omega$ is the unit disc. If $\partial_{\nu}u(z_{\ell},t)=\partial_\nu \widetilde{u}(z_{\ell},t)$  for all $t\in (0,T)$ and $\ell=1,2$, with $z_{\ell}=e^{i\theta_{\ell}}$ and $\theta_1-\theta_2\notin \mathbb{Z}\pi$, then $p=\widetilde{p}$.
\item [{\rm(ii)}] If $g$ and $\widetilde{g}$ are unknown, then $\partial_{\nu}u(z_{\ell},t)=\partial_\nu \widetilde{u}(z_{\ell},t),\ell=1,2,3$, for any three distinct points $\{z_{\ell}\}_{\ell=1}^3$ and $t\in (0,T)$ implies $p=\widetilde p$ and $g=\widetilde g$.
\end{itemize}
\end{theorem}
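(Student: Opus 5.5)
The key observation is that, since $\supp g\subset[0,T_1]$ with $T_1<T$, the source is switched off on $(T_1,T)$. There the solution solves the homogeneous heat equation, so it is real-analytic in $t$. I would write the flux through the Dirichlet heat kernel $G(x,y,t)=\sum_n e^{-\lambda_n t}\phi_n(x)\phi_n(y)$. This gives $\partial_\nu u(z,t)=\int_0^{T_1} g(s)\,\partial_{\nu_x}G(z,p,t-s)\d s$ for $t>T_1$.

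This expression extends analytically to all $t>T_1$. So equality on $(T_1',T)$, with $T_1'=\max(T_1,\widetilde T_1)$, propagates to all $t>T_1'$. Integrating in time over $(0,\infty)$ then gives
\[
\int_0^\infty \partial_\nu u(z_\ell,t)\d t=\Big(\int_0^T g\d t\Big)\,\partial_{\nu_x}\Big(\int_0^\infty G(z_\ell,p,\tau)\d\tau\Big)=\Big(\int_0^T g\d t\Big)\, \partial_{\nu_x}G_0(z_\ell,p),
\]
where $G_0$ is the Green's function of $-\Delta$. Hence $-\partial_{\nu}G_0(z,p)=P(p,z)$ is the Poisson kernel. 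Fubini is justified by the exponential decay $e^{-\lambda_1 t}$ and integrability near $t=0$, since $z\neq p$ and the kernel is smooth away from the diagonal. The time-zero part needs care: on $(0,T_1)$ the data are already equal, and on $(T_1,\infty)$ they are equal by analyticity.

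The flux data therefore yield the relation $c\,P(p,z_\ell)=\widetilde c\,P(\widetilde p,z_\ell)$ with $c=\int g\neq0$ and $\widetilde c=\int\widetilde g\ne 0$. On top of this we keep the full time-dependent equality.

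\textbf{Part (ii).} I map $\Omega$ conformally onto $D$ by a Riemann map $\Phi$. By the Kellogg--Warschawski theorem, $\Phi$ is smooth up to the boundary with $\Phi'\neq0$. Under this map, $P_\Omega(p,z)=|\Phi'(z)|\,P_D(\Phi(p),\Phi(z))$, and
\[
P_D(a,e^{i\theta})=\frac{1}{2\pi}\frac{1-|a|^2}{|e^{i\theta}-a|^2}.
\]
The conformal factor cancels, leaving $c(1-|a|^2)/|w_\ell-a|^2=\widetilde c(1-|b|^2)/|w_\ell-b|^2$ for three distinct $w_\ell\in\partial D$, with $a=\Phi(p)$ and $b=\Phi(\widetilde p)$. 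The plan is to show this forces $a=b$ and $c=\widetilde c$. The ratio $|w-a|^2/|w-b|^2$ would take the constant value $\kappa=c(1-|a|^2)/(\widetilde c(1-|b|^2))$ at three boundary points. If $a\ne b$, the level set $\{|w-a|=\sqrt\kappa\,|w-b|\}$ is a line or an Apollonius circle. Such a circle has $a,b$ as inverse points, and it cannot be the unit circle because $a,b\in D$ are not inverse to each other with respect to $\partial D$. A line or circle different from $\partial D$ meets $\partial D$ in at most two points, which gives a contradiction. So $a=b$, i.e.\ $p=\widetilde p$, and then $c=\widetilde c$.

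Knowing $p=\widetilde p$, the difference $w=u-\widetilde u$ solves the problem with source $(g-\widetilde g)\delta(x-p)$. Its flux at $z_1$ is $\int_0^t (g-\widetilde g)(s)K(t-s)\d s=0$ with $K(t)=\partial_\nu G(z_1,p,t)$. Since $K\ge0$ is not identically zero near $0$ (positivity of the Poisson-type flux by the Hopf lemma), Titchmarsh's convolution theorem gives $g=\widetilde g$ on $(0,T)$. More precisely, Titchmarsh requires $\inf\supp$ data, and $K$ has support starting at $0$, which settles it.

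\textbf{Part (i).} Here $g=\widetilde g$, so $c=\widetilde c$, and we have only two points. I would use $\Omega=D$ directly: the level-set argument gives two points $e^{i\theta_1},e^{i\theta_2}$ on a circle or line with $a,b$ symmetric. This alone is not enough, so I would additionally use the full time dependence. Expanding in the disc eigenfunctions $J_m(\sqrt{\lambda_{mk}}r)e^{im\theta}$ gives, for $t>T_1$, a Dirichlet series $\sum_{m,k} e^{-\lambda_{mk}t}\hat g(\lambda_{mk})\,(\dots)$. I would compare the leading terms, as in Theorem \ref{thm:ball}. The first eigenfunction gives $\hat g(\lambda_{01})J_0(\sqrt{\lambda_{01}}|p|)=\hat g(\lambda_{01})J_0(\sqrt{\lambda_{01}}|\widetilde p|)$, hence $|p|=|\widetilde p|$, provided $\hat g(\lambda_{01})\ne0$. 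The $m=1$ modes give $|p|$-weighted $\cos(\theta_\ell-\arg p)=\cos(\theta_\ell-\arg\widetilde p)$ for $\ell=1,2$. Since $\theta_1-\theta_2\notin\mathbb Z\pi$, these two equations force $\arg p=\arg\widetilde p$.

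The main obstacle is that some Laplace values $\hat g(\lambda)=\int g(s)e^{\lambda s}\d s$ may vanish. I would handle this by taking the first index with nonvanishing coefficient in each angular mode, since $\hat g$ is entire and nontrivial. Alternatively, combining the Poisson-kernel identity $|w_\ell-a|=|w_\ell-b|$ from the integrated data with $|a|=|b|$ gives reflection symmetry, which yields $a=b$ under $\theta_1-\theta_2\notin\mathbb Z\pi$ without needing any Laplace coefficients.
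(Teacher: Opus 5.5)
Your part (ii) is sound and follows the paper's route: analytic continuation past $T_1$, the $s=0$ Laplace transform producing the Poisson kernel, Kellogg--Warschawski, and Apollonius circles. Your inverse-point remark supplies a justification the paper leaves implicit, namely why the Apollonius circle cannot be $\partial D$. For recovering $g$ you use Titchmarsh's theorem on $(0,T)$, where the paper divides Laplace transforms using $\partial_\nu G_s(z,p)\neq0$ (Hopf). Your route is valid: $t\mapsto\partial_\nu K_\Omega(z_1,p,t)$ is real-analytic on $(0,\infty)$ and its integral over $(0,\infty)$ is, up to sign, $P(p,z_1)\neq0$, so it is not null near $t=0$. (For the outward normal it is $\le 0$, not $\ge0$, but only non-vanishing matters.) It also uses only the data on $(0,T)$.

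The gap is in part (i). All your information there comes from the tail Dirichlet series on $t>T_1$. Its coefficient at an eigenvalue $\lambda$ carries the factor $\hat g(\lambda)=\int_0^{T_1}g(s)e^{\lambda s}\,\mathrm{d}s$. Both of your routes therefore depend on $\hat g(\lambda_{01})\neq0$; this includes the ``reflection'' alternative, since it still needs $|a|=|b|$. Nothing in $\mathcal{A}_c$ ensures this: $g=\chi_{(0,\tau)}-e^{-\lambda_{01}\tau}\chi_{(\tau,2\tau)}$ has $\hat g(\lambda_{01})=0$ and $\int g\neq0$.

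Your remedy does not repair this, for two reasons. First, for $k\ge2$ the map $r\mapsto J_0(\sqrt{\lambda_{0k}}\,r)$ is not injective on $[0,1)$, so a later radial coefficient does not give $|p|=|\widetilde p|$. Second, a nontrivial entire $\hat g$ can vanish on the whole sequence $\{\lambda_{0k}\}$, which only accumulates at infinity. For instance, take a Gevrey bump $g_0$ of order $3/2$ supported in $(0,T_1)$ with $\int g_0\neq0$, and set $g=\sum_{k\ge0}g_0^{(k)}/(4^k(k!)^2)$. Then $\hat g(\lambda)=J_0(\sqrt{\lambda})\,\hat g_0(\lambda)$ while $\int g=\int g_0$, and the tail carries no radial information at all.

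The missing idea is to strip off $g$ \emph{before} expanding in eigenfunctions, using the data on all of $(0,\infty)$ rather than only the tail. The paper writes $\mathcal{L}g(s)\bigl[\mathcal{L}(\partial_\nu K_\Omega)(z_\ell,p,s)-\mathcal{L}(\partial_\nu K_\Omega)(z_\ell,\widetilde p,s)\bigr]=0$ for $s>0$ and uses that $\mathcal{L}g$ is entire and $\not\equiv0$. Equally, your own Titchmarsh argument works: apply it to $g*\bigl(\partial_\nu K_\Omega(z_\ell,p,\cdot)-\partial_\nu K_\Omega(z_\ell,\widetilde p,\cdot)\bigr)=0$ on $(0,T)$. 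The bracket then vanishes on the nonempty interval $(0,T-\inf\supp g)$, hence everywhere by real-analyticity. Either way $\partial_\nu K_\Omega(z_\ell,p,t)=\partial_\nu K_\Omega(z_\ell,\widetilde p,t)$ for all $t>0$, so every eigenvalue enters with weight one, exactly as in Theorem \ref{thm:ball} with $q_1=\widetilde q_1$.

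After that your endgame works and is leaner than the paper's Fourier--Bessel step. The map $r\mapsto J_0(\sqrt{\lambda_{01}}\,r)$ is strictly decreasing on $[0,1]$, because $J_0'=-J_1<0$ up to the first zero of $J_1$, which exceeds $\sqrt{\lambda_{01}}$. So $\lambda_{01}$ alone gives $|p|=|\widetilde p|$, and then either the $\lambda_{11}$ modes or your reflection argument fixes the angle.
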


Theorem \ref{thm:compact} generalizes Corollary \ref{cor:disc-pwc}(i) in two aspects: it substantially relaxes the requirement on the time-dependent amplitude $g(t)$ from being piecewise constant to being compactly supported, and $\Omega$ is not necessary the disc, provided that one can have the observation at one more point. The proof relies heavily on a new approach based on Laplace transform, analytic continuation and properties of the Poisson kernel. The case of a general smooth domain in $\mathbb{R}^2$ is treated using Kellogg-Warschawski's theorem, a version of Riemann mapping theorem. Note that an assertion similar to Theorem \ref{thm:compact}(ii) holds also for the case of a unit all in $\mathbb{R}^d$, i.e., the flux observations at $d+1$ distinct points uniquely determine the source location $p$ and a compactly supported amplitude $g(t)\in \mathcal{A}_c$; see  Remark \ref{rmk:compact} for details.

In Section \ref{sec:numer}, we present several numerical experiments to illustrate the feasibility of reconstructing the location $p$ and amplitude $g(t)$ in the two-dimensional case. The numerical results indicate that the location $p$ can be stably recovered when the amplitude $g(t)$ is known, and the amplitude $g(t)$ can also be recovered using suitable regularization techniques.

The rest of the paper is organized as follows. In Section \ref{sec:prelim}, we give several preliminary results on the direct problem, including the representation of the boundary flux. In Section \ref{sec:proof}, we give the technical proofs of Theorems \ref{thm:ball} and \ref{thm:compact}. In Section \ref{sec:numer}, we present numerical results to illustrate the feasibility of the reconstruction. Throughout the notation $(\cdot,\cdot)$ denotes the $L^2(\Omega)$ inner product, and $C$, with or without a superscript, denotes a generic positive constant which may change at each occurrence.

\section{Preliminaries on the direct problem}\label{sec:prelim}
In this section we state several preliminary results on problem \eqref{eqn:heat} which will play a crucial role in the analysis  in Section \ref{sec:proof}.
\subsection{Regularity and extension of the solution}
In view of \cite[Theorem 1, Section 6.5.1]{Evans2010}, the negative Dirichlet Laplacian $-\Delta$ on a smooth domain $\Omega\subset \RR^d$ admits an eigen-system $\{(\lambda_n,\varphi_n)\}_{n=1}^\infty$, with the eigenvalues ordered nondecreasingly
$0<\lambda_1\leq \lambda_2\leq \cdots$ and $\lambda_n\to\infty$ as $n\to \infty$. Each eigenvalue $\lambda_n$  has a finite multiplicity.
Moreover, the eigenfunctions $\{\varphi_n\}_{n=1}^\infty\subset C^{\infty}(\overline{\Omega})$ can be taken to form an orthonormal basis of $L^2(\Omega)$.
\begin{lemma}[{\cite[Sections 4.1, 6.2.4 and 6.2.5]{Grebenkov2013}}]\label{lem:control}
The eigenpairs $\{(\lambda_n,\varphi_n)\}_{n=1}^\infty$ satisfy
\begin{align*}
\|\varphi_n\|_{L^{\infty}(\Omega)}\leq \lambda_n^{\frac{d}{4}}\quad
    \mbox{and}\quad
\|\p_{\nu}\varphi_n(x)\|_{L^{\infty}(\p\Omega)}\leq C\lambda_n^{\frac{1}{2}}.
\end{align*}
Also, the eigenvalues $\lambda_n$ satisfy the following Weyl's law:
\begin{align*}
    \lambda_n\sim Cn^{\frac{2}{d}},\quad n\to\infty.
\end{align*}
\end{lemma}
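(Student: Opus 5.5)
The lemma collects three standard facts, and I would prove each one by a classical argument rather than by new machinery.

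\textbf{Weyl's law.} I would invoke Weyl's asymptotic formula for the Dirichlet Laplacian on a bounded smooth domain. Let $N(\lambda)=\#\{n:\lambda_n\le\lambda\}$ be the counting function. Weyl's formula gives $N(\lambda)\sim (2\pi)^{-d}\omega_d|\Omega|\lambda^{d/2}$ as $\lambda\to\infty$, where $\omega_d$ is the volume of the unit ball. Inverting this relation, i.e.\ evaluating $N(\lambda_n)=n$ up to multiplicity, yields $\lambda_n\sim C n^{2/d}$ with $C=4\pi^2(\omega_d|\Omega|)^{-2/d}$. If a self-contained argument is wanted, I would use Dirichlet--Neumann bracketing with cubes, which is sufficient for the two-sided comparison $\lambda_n\asymp n^{2/d}$ used later.

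\textbf{$L^\infty$ bound.} I would use the heat semigroup $e^{t\Delta}$, whose Dirichlet kernel is dominated by the Gaussian kernel. This gives the ultracontractive bound $\|e^{t\Delta}\|_{L^2\to L^\infty}\le (8\pi t)^{-d/4}$. Since $\varphi_n=e^{\lambda_n t}e^{t\Delta}\varphi_n$, we get
\[
\|\varphi_n\|_{L^\infty}\le e^{\lambda_n t}(8\pi t)^{-d/4}.
\]
Choosing $t=\lambda_n^{-1}$ gives $\|\varphi_n\|_{L^\infty}\le e(8\pi)^{-d/4}\lambda_n^{d/4}$. This is $\lambda_n^{d/4}$ up to a dimensional constant. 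Taking $t=d/(4\lambda_n)$ sharpens the constant, and I would adopt whichever normalization the cited reference uses, absorbing constants as the paper does.

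\textbf{Boundary normal derivative.} This is the main obstacle: the bound $C\lambda_n^{1/2}$ is much better than what one gets from interpolating elliptic regularity with the $L^\infty$ bound. My approach is a Rellich-type identity. Take a smooth vector field $X$ on $\overline\Omega$ with $X=\nu$ on $\p\Omega$. Integrate $\operatorname{div}\bigl(2(X\cdot\nabla\varphi)\nabla\varphi-X|\nabla\varphi|^2+X\lambda\varphi^2\bigr)$ over $\Omega$. Because $\varphi=0$ on the boundary, this gives
\[
\int_{\p\Omega}|\p_\nu\varphi_n|^2\le C\bigl(\|\nabla\varphi_n\|_{L^2}^2+\lambda_n\|\varphi_n\|_{L^2}^2\bigr)\le C\lambda_n,
\]
i.e.\ an $L^2(\p\Omega)$ bound of order $\lambda_n^{1/2}$. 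This is what is needed for summability in the flux representation.

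The pointwise $L^\infty(\p\Omega)$ version, as stated, follows from the same identity only in integrated form. In general it is genuinely harder: for generic domains, local Weyl and wave-parametrix bounds give the pointwise estimate only with a larger power of $\lambda_n$. For the disc and the ball, however, I would verify it explicitly from the Bessel representation $\varphi=c\,J_{m+d/2-1}(\sqrt\lambda r)r^{1-d/2}Y_m$. Here $\p_\nu\varphi$ equals $c\sqrt\lambda J'$ at the zero of $J$, and the normalization constant $c$ is computed via $\int_0^1 J^2r\,dr=\tfrac12 J'(\sqrt\lambda)^2$. In the general case I would defer to the cited reference for the precise pointwise statement.
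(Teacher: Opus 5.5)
The paper gives no argument here, only a citation, so your proofs of Weyl's law and of the $L^\infty(\Omega)$ bound stand on their own, and both are correct. No constant needs absorbing: $t=d/(4\lambda_n)$ gives $\|\varphi_n\|_{L^\infty(\Omega)}\le (e/(2\pi d))^{d/4}\lambda_n^{d/4}$ with $(e/(2\pi d))^{d/4}<1$. Even $t=\lambda_n^{-1}$ gives a constant $e(8\pi)^{-d/4}<1$ for $d\ge 2$.

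The gap is the pointwise boundary bound, and your fallback for the ball fails. Your Bessel computation uses $J_\beta'(\alpha)=-J_{\beta+1}(\alpha)$ at a zero, and the derivative of $r^{1-d/2}$ is multiplied by $J_\beta(\alpha)=0$. It therefore gives exactly $|\partial_\nu u_{klm}(\theta)|=\sqrt{2}\,\alpha_{kl}\,|Y_k^m(\theta)|$. For $d\ge 3$ the spherical-harmonic factor is not bounded uniformly in $k$. For $d=3$, take the zonal harmonic $Y_k^0=\sqrt{(2k+1)/(4\pi)}\,P_k(\cos\theta)$ and $l=1$, so that $\alpha_{k1}\sim k$. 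Then $\|\partial_\nu u_{k10}\|_{L^\infty(\partial\Omega)}=\sqrt{2}\,\alpha_{k1}\sqrt{(2k+1)/(4\pi)}\sim \pi^{-1/2}\lambda^{3/4}$. Hence the $L^\infty(\partial\Omega)$ estimate with exponent $\tfrac12$ is false as stated for $d\ge 3$, at least for the standard basis of Theorem~\ref{thm:Dirichlet_eigenfunc}. Your disc computation does close, with $C=\sqrt{2/\pi}$. Deferring to the reference cannot fill the gap: the standard estimate of this type (Rellich, Hassell--Tao) is exactly the $L^2(\partial\Omega)$ bound you derived.

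Your remark that the $L^2(\partial\Omega)$ bound is what the flux representation needs is also not accurate. The paper sums $\sum_n e^{-\lambda_n t}\partial_\nu\varphi_n(z)\varphi_n(p)$ at fixed points $z=z_\ell$, in Lemma~\ref{heat_kernel}(iii), Lemma~\ref{lem:ana-ext}, and Step~1 of the proof of Theorem~\ref{thm:ball}; that needs a pointwise bound. Any polynomial pointwise bound suffices there, and that one you can prove:
\begin{itemize}
\item Since $\varphi_n\in H^1_0(\Omega)$ and $-\Delta\varphi_n=\lambda_n\varphi_n$, iterating boundary regularity $\|\varphi_n\|_{H^{m+2}}\le C(\lambda_n\|\varphi_n\|_{H^m}+1)$ gives $\|\varphi_n\|_{H^{2k}(\Omega)}\le C_k\lambda_n^k$.
\item The embedding $H^{2k}(\Omega)\hookrightarrow C^1(\overline{\Omega})$ for $2k>d/2+1$ then yields $\|\partial_\nu\varphi_n\|_{L^\infty(\partial\Omega)}\le C\lambda_n^k$.
\end{itemize}
Combined with the factor $e^{-\lambda_n t}$ and Weyl's law, this gives every uniform convergence the paper uses. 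So the lemma should be stated with your $L^2(\partial\Omega)$ bound plus such a polynomial pointwise bound; with that correction your argument is complete.
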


Let $K$ and $K_\Omega$ be the heat kernel on the whole space $\RR^d$ and the Dirichlet heat kernel on the domain $\Omega$, respectively, which are defined  by
\begin{equation}\label{eqn:Green-domain-flux}\begin{aligned}
    &K(x,y,t)=(4\pi t)^{-\frac{d}{2}} e^{-\frac{|x-y|^2}{4t}},\quad x,y\in\mathbb R^d,\ t>0,\\
    & K_{\Omega}(x,y,t)=\sum_{n=1}^\infty e^{-\lambda_nt}\varphi_n(x)\varphi_n(y),\quad x,y\in\Omega,\ t>0.\end{aligned}
\end{equation}
 We summarize the properties of the heat kernels in the next lemma.
\begin{lemma}\label{heat_kernel}
For the functions $K$ and $K_{\Omega}$, the following statements hold.
\begin{enumerate}
\item [{\rm(i)}] $\min (K_\Omega(x,y,t),K(x,y,t))> 0$,  $x,y\in \Omega$,  $t>0$.
\item [{\rm(ii)}]
$|K_{\Omega}(x,y,t)|\leq Ct^{-\frac{d}{2}}e^{-C'\frac{|x-y|^2}{t}}$,  $x,y\in \Omega$,  $t>0$.
\item [{\rm(iii)}] $K_{\Omega},K\in C^\infty(\overline{\Omega}\times\overline{\Omega}\times (0,+\infty))$.
\item [{\rm(iv)}] Fix  $\varepsilon>0$ and $p\in \Omega$, and define $\Omega_{\varepsilon}=\{x\in \Omega:|x-p|\geq\varepsilon\}$. Then both $K_{\Omega}(\cdot,p,\cdot)$ and $K(\cdot,p,\cdot)$ can be extended by zero to an element of $C^{\infty}(\overline{\Omega_{\varepsilon}}\times[0,+\infty))$.
    \end{enumerate}
\end{lemma}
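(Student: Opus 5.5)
We prove the four items of Lemma \ref{heat_kernel} separately, using standard facts about the Dirichlet heat semigroup together with the bounds of Lemma \ref{lem:control}. The main work is in (iv) for $K_\Omega$. The expected obstacle is the uniformity of the derivative estimates up to $\partial\Omega$ and down to $t=0$; the approach is to bound derivatives of $H$ by a fixed multiple of $\sup_{\partial\Omega\times(0,\tau]}K$ via maximum principle and compatibility, and to use parabolic boundary regularity.

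\textbf{Items (i) and (iii).}
For $K$, positivity is immediate from the explicit Gaussian, and smoothness on $\RR^d\times\RR^d\times(0,\infty)$ is clear.

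For $K_\Omega$, first show that the eigenfunction series converges in $C^k(\overline\Omega\times\overline\Omega\times[t_0,\infty))$ for every $t_0>0$ and every $k$. Bound $\|\varphi_n\|_{C^k(\overline\Omega)}$ by a power of $\lambda_n$, using elliptic regularity applied to $\Delta^m\varphi_n=(-\lambda_n)^m\varphi_n$ together with Sobolev embedding and Lemma \ref{lem:control}. Weyl's law then makes $\sum_n e^{-\lambda_n t_0}\lambda_n^{N}$ finite, which gives (iii).

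Positivity of $K_\Omega$ follows from the parabolic strong maximum principle. $K_\Omega(\cdot,y,\cdot)$ is the limit of solutions with nonnegative, nontrivial initial data concentrating at $y$, so it is $\ge 0$ and not identically zero. Strong positivity in the interior of the connected domain $\Omega$ then gives $K_\Omega>0$.

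\textbf{Item (ii).}
Compare $K_\Omega$ with $K$. The difference $w=K(\cdot,y,\cdot)-K_\Omega(\cdot,y,\cdot)$ solves the heat equation with zero initial data and boundary data $K\ge0$. By the maximum principle, $0\le K_\Omega\le K=(4\pi t)^{-d/2}e^{-|x-y|^2/(4t)}$. This is (ii) with $C=(4\pi)^{-d/2}$ and $C'=1/4$; no Davies-type estimate is needed.

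\textbf{Item (iv).}
For $K$, every derivative $\partial_x^\alpha\partial_t^j K(x,p,t)$ has the form
\[
P(x-p,t^{-1})\,t^{-d/2}e^{-|x-p|^2/(4t)},
\]
with $P$ a polynomial. When $|x-p|\ge\varepsilon$, this is bounded by $C t^{-N}e^{-\varepsilon^2/(4t)}$, which tends to $0$ as $t\to0^+$. Hence the zero extension is $C^\infty$ on $\overline{\Omega_\varepsilon}\times[0,\infty)$.

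For $K_\Omega$, write $K_\Omega(x,p,t)=K(x,p,t)-H(x,t)$, where $H$ solves the heat equation in $\Omega$ with $H=K(\cdot,p,\cdot)$ on $\partial\Omega$ and $H(\cdot,0)=0$. The boundary data vanish to infinite order at $t=0$ because $\operatorname{dist}(p,\partial\Omega)>0$. Consequently all compatibility conditions hold, and parabolic regularity gives $H\in C^\infty(\overline\Omega\times[0,\infty))$ with all derivatives vanishing at $t=0$. Combined with the result for $K$, this shows that $K_\Omega(\cdot,p,\cdot)$, extended by zero, is smooth on $\overline{\Omega_\varepsilon}\times[0,\infty)$.

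To make the last step concrete, extend the boundary data by zero for $t<0$. This gives a smooth heat problem on $\Omega\times(-1,\infty)$ whose solution is identically zero for $t<0$, and standard parabolic boundary regularity on this cylinder yields smoothness across $t=0$.
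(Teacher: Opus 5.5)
Your proposal is correct. For (iv) it uses essentially the same decomposition as the paper, while for (i)--(iii) it replaces the paper's citations with self-contained arguments.

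\textbf{Items (i)--(iii).} The paper takes positivity and the Gaussian bound from Davies. You obtain (i) from the strong maximum principle and (ii) from the comparison $0\le K_\Omega\le K$. This is more elementary and gives explicit constants $C=(4\pi)^{-d/2}$, $C'=1/4$. Your $C^k$ convergence of the eigenfunction series via iterated elliptic estimates also fills in what the paper leaves implicit when it appeals to uniform convergence through Lemma~\ref{lem:control}: uniform convergence alone only yields continuity, not smoothness.

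\textbf{Item (iv).} The paper's $K_p=K_\Omega-K+\zeta K$ equals $\zeta K(\cdot,p,\cdot)-H$ in your notation. The paper lifts the boundary data by the cutoff $\zeta K$, so that $K_p$ has homogeneous Dirichlet data and a smooth source $f_p$ whose time derivatives all vanish at $t=0$. It can then apply Evans' regularity theorem with compatibility conditions directly. You work with the inhomogeneous Dirichlet problem for $H$, or its zero extension to $t<0$. That needs regularity theory for nonhomogeneous boundary data, which in practice is obtained by exactly this lifting.

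\textbf{A step you should not skip.} You define $K_\Omega$ by the eigenfunction series but then assert, without proof, that $K_\Omega(\cdot,p,\cdot)=K(\cdot,p,\cdot)-H$. Both (ii) and (iv) rest on this identity. The paper secures the link through the transposition identity for $K_p$ and uniqueness of transposition solutions. In your setting a few lines suffice:
\begin{itemize}
\item[(a)] Set $v=K(\cdot,p,\cdot)-H$. It is smooth on $\overline\Omega\times(0,\infty)$, vanishes on $\partial\Omega$, and solves the heat equation.
\item[(b)] By Green's formula, $c_n(t)=(v(\cdot,t),\varphi_n)$ satisfies $c_n'=-\lambda_n c_n$.
\item[(c)] As $t\to0^+$, $c_n(t)\to\varphi_n(p)$. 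This holds because $K(\cdot,p,t)$ is an approximate identity at the interior point $p$ and $\varphi_n\in C(\overline\Omega)$, while $H(\cdot,t)\to0$ uniformly.
\item[(d)] Hence $c_n(t)=e^{-\lambda_n t}\varphi_n(p)$, and therefore $v=K_\Omega(\cdot,p,\cdot)$.
\end{itemize}
Accordingly, establish the decomposition in (iv) before (ii). The maximum-principle comparison in (ii) uses that $K-K_\Omega=H$ is continuous up to $t=0$ with zero initial value and nonnegative boundary data.
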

\begin{proof}
All the statements for the free-space heat kernel $K$ can be verified directly using its explicit formula. For the kernel $K_{\Omega}$, the statements (i)--(ii) can be found at \cite[Corollary 3.2.8 and Theorem 3.3.5]{Davies1989}, and the statement (iii) follows from the fact that the series
\begin{equation*}
K_{\Omega}(x,y,t)=\sum_{n=1}^\infty e^{-\lambda_nt}\varphi_n(x)\varphi_n(y)
\end{equation*}
converges uniformly on $ \Omega\times\Omega\times [t_0,\infty)$ for any $t_0>0$ by Lemma \ref{lem:control}. We now prove (iv) for $K_{\Omega}$. Without loss of generality, we can set $\varepsilon<\mathrm{dist}(p,\p\Omega)/3$ and $B(p,\varepsilon)=\{x\in \Omega:|x-p|\leq \varepsilon\}$. Now choose a smooth  function $\zeta\in C^\infty(\overline{\Omega})$ such that $0\leq \zeta \leq 1$, $\zeta|_{B(p.\varepsilon)}=0$, $\zeta=1$ on a neighborhood of $\partial\Omega$ and $|\nabla\zeta|\leq C$. Then the function $(x,t)\mapsto \zeta(x) K(x,p,t)$ belongs to $C^{\infty}(\overline{\Omega}\times[0,\infty))$. Let
\begin{equation}\label{eqn:Kp}
K_p(x,t)=K_{\Omega}(x,p,t)-K(x,p,t)+\zeta(x) K(x,p,t),\quad (x,t)\in\Omega\times(0,+\infty).
\end{equation}
Let $T_\star>0$ and  $\phi\in C^\infty(\overline{\Omega}\times[0,T_\star])$ be such that $\phi(x,T_\star)=0$, for $x\in\overline{\Omega}$, and $\Delta^k\phi(x,t)=0$, for $(x,t)\in\partial\Omega\times(0,T_\star)$ and $k\in\mathbb N\cup\{0\}$. Then, using classical properties of the heat kernel, we deduce
\begin{align*}
\int_0^{T_\star}\int_\Omega K_\Omega(x,p,t)(-\partial_t\phi-\Delta\phi)(x,t)\d x\d t&=\phi(p,0),\\
\int_0^{T_\star}\int_\Omega K(x,p,t)(-\partial_t\phi-\Delta\phi)(x,t)\d x\d t&=\phi(p,0)-\int_0^{T_\star}\int_{\partial\Omega}K(x,p,t)\partial_\nu\phi(x,t)\d\sigma(x)\d t.
\end{align*}
Moreover, by the properties of the map $\zeta$, we obtain
\begin{align*}
&\int_0^{T_\star}\int_\Omega\zeta(x)K(x,p,t)(-\partial_t\phi-\Delta\phi)(x,t)\d x\d t\\
=&\zeta(p)\phi(p,0)-\int_0^{T_\star}\int_{\partial\Omega}\zeta(x)K(x,p,t)\partial_\nu\phi(x,t)\d\sigma(x)\d t\\
&+\int_0^{T_\star}\int_\Omega(\p_t(\zeta K)-\Delta (\zeta K))(x,p,t)\phi(x,t)\d x\d t\\
=&-\int_0^{T_\star}\int_{\partial\Omega}K(x,p,t)\partial_\nu\phi(x,t)d\sigma(x)\d t\\
&+\int_0^{T_\star}\int_\Omega(\p_t(\zeta K)-\Delta (\zeta K))(x,p,t)\phi(x,t)\d x\d t.
\end{align*}
By combining these identities with \eqref{eqn:Kp}, we find
\begin{equation}\label{idd}\int_0^{T_\star}\int_\Omega K_p(x,t)(-\partial_t\phi-\Delta\phi)(x,t)\d x\d t=\int_0^{T_\star}\int_\Omega(\p_t(\zeta K)-\Delta (\zeta K))(x,p,t)\phi(x,t)\d x\d t.\end{equation}
In view of (iii) and the definition of $\zeta$, we have
$$(\p_t(\zeta K)-\Delta (\zeta K))(\cdot,p,\cdot)=-2\nabla\zeta\cdot\nabla K(\cdot,p,\cdot)-(\Delta\zeta)K(\cdot,p,\cdot)\in C^\infty(\overline{\Omega}\times[0,T_\star]).$$
Thus, by density, we can extend the identity \eqref{idd} to any $\phi\in H^1(0,T_\star;L^2(\Omega))\cap L^2(0,T_\star:H^2(\Omega)\cap H^1_0(\Omega))$ such
that $\phi(x,T_1)=0$, for $x\in\Omega$. Hence, $K_p$ is the unique solution in the transposition sense of the problem
\begin{equation*}
   \left\{ \begin{aligned}
    \partial_tK_p-\Delta K_p  &=(\p_t(\zeta K)-\Delta (\zeta K))(\cdot,p,\cdot),\quad \mbox{in } \Omega\times (0,T_\star),\\
    K_p&=0
    ,\quad\mbox{on }\partial\Omega\times (0,T_\star),\\
        K_p&=0,\quad \mbox{in }\Omega\times \{0\}.
    \end{aligned}\right.
\end{equation*}
Let $f_p(x,t)=(\p_t(\zeta K)-\Delta_x (\zeta K))(x,p,t)$, $(x,t)\in \overline{\Omega}\times[0,T_\star]$. Then, $f_p\in C^\infty(\overline{\Omega}\times[0,T_\star])$ and, for any $k\in \mathbb{N}$,
\[\p_t^kf_p(x,t)=-2\nabla\zeta(x)\cdot \nabla \p_t^kK(x,p,t)-\p_t^kK(x,p,t)\Delta \zeta(x),\quad (x,t)\in \overline{\Omega}\times(0,T_\star).\]
Since $K(x,y,t)$ decays exponentially near $t=0$ when $|x-y|>\varepsilon$, we have $\p_t^kf_p(x,t)|_{t=0}=0$ for any $k\in \mathbb{N}$ and $x\in \overline{\Omega}$. The standard parabolic regularity theory \cite[Chapter 7, Theorem 7]{Evans2010} implies that $K_p\in C^{\infty}( \overline{\Omega}\times [0,T_\star])$ for any $T_\star>0$. Thus,
$K_p\in C^{\infty}( \overline{\Omega}\times [0,\infty)).$ Now it follows that
\[K_{\Omega}(\cdot,p,\cdot)|_{\overline{\Omega_{\varepsilon}} \times[0,\infty)}=K_p|_{\overline{\Omega_{\varepsilon}} \times[0,\infty)}+(1-\zeta)K(\cdot,p,\cdot)|_{\overline{\Omega_{\varepsilon}} \times[0,\infty)}\in C^{\infty}( \overline{\Omega_{\varepsilon}} \times[0,\infty)).
\]
This completes the proof of the lemma.
\end{proof}

Let $H(t)$ be the Heaviside function and $K_p$ be given by \eqref{eqn:Kp}. By induction on $k\in\mathbb N\cup\{0\}$, we can verify $\p_t^kK_p(x,t)|_{t=0}=0$ for $x\in\overline{\Omega_{\varepsilon}}$. 
Recall that, the solution $u$ in the transposition sense of \eqref{eqn:heat} can be represented by
\[u(x,t)=\int_0^tg(t-s)K_{\Omega}(x,p,s)\mathrm{d}s,\quad (x,t)\in\Omega\times(0,T),\]
and, we have
\[u(x,t)=(Hg)*_{(t)}(HK_p(\cdot,x)),\quad (x,t)\in\overline{\Omega_{\varepsilon}}\times(0,T),\]
which implies that $u|_{\overline{\Omega_{\varepsilon}}\times[0,T]}$ can be extended by $0$ to an element of
$ C^{\infty}( \overline{\Omega_{\varepsilon}}\times[0,T])$. Using these properties, we can extend $u$   to a smooth function, still denoted by $u$, defined on $\overline{\Omega_{\varepsilon}}\times[0,+\infty)$ by
\[u(x,t)=\int_0^tg(t-s)K_{\Omega}(x,p,s)\mathrm{d}s,\quad (x,t)\in\overline{\Omega_{\varepsilon}}\times[0,+\infty).\]
Here and below, the map $g$  denotes the extension by zero of $g$ to $(0,+\infty)$. This extension $u$ satisfies
$\p_{\nu}u\in C^{\infty}(\partial\Omega\times [0,\infty))$.
Since $\p_{\nu}K_{\Omega}\in C^{\infty}([0,T]\times \p\Omega)$ and $g\in L^1(0,T)$ for either $g\in \mathcal{A}_{\rm pwc}$ or $g\in \mathcal{A}_c$, the normal derivative $\partial_\nu u$ of $u$ is given by
\begin{equation}\label{eqn:flux-rep}
\p_{\nu}u(z,t)=\int_{0}^tg(t-s)\p_{\nu}K_{\Omega}(z,p,s)\mathrm{d}s,\quad (z,t)\in\partial\Omega\times[0,+\infty).
\end{equation}
Now we state an analytic  extension result.
\begin{lemma}\label{lem:ana-ext}
Suppose $g\in \mathcal{A}_c$ and $\supp (g)\subset [0,T_1]$. Then for any fixed $x\in\partial\Omega$, the map $t\mapsto \p_{\nu}u(x,t)$ is analytic on $(T_1,+\infty)$.
\end{lemma}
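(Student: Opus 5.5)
For $t>T_1$, the representation \eqref{eqn:flux-rep} together with $\supp(g)\subset[0,T_1]$ gives, after the substitution $s\mapsto t-s$,
\[
\p_\nu u(x,t)=\int_0^{T_1} g(s)\,\p_\nu K_\Omega(x,p,t-s)\d s ,\qquad t>T_1 .
\]
Here $t-s\ge t-T_1>0$, so only the heat kernel at strictly positive times enters. Inserting the eigenfunction expansion \eqref{eqn:Green-domain-flux} and differentiating term by term in $x$, I expect
\[
\p_\nu u(x,t)=\sum_{n=1}^\infty \varphi_n(p)\,\p_\nu\varphi_n(x)\, e^{-\lambda_n t}\int_0^{T_1} g(s)e^{\lambda_n s}\d s .
\]
The idea is to read this as a series of entire functions of $t$ and to show that it converges locally uniformly on the half-plane $\{\mathrm{Re}\, t>T_1\}$.

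\textbf{Step 1: justify the expansion.} The termwise normal derivative of the series for $K_\Omega$ converges uniformly on $\p\Omega\times[t_0,\infty)$ for every $t_0>0$. This follows from Lemma~\ref{lem:control}, which gives
\[
|\varphi_n(p)\,\p_\nu\varphi_n(x)|\le C\lambda_n^{\frac d4+\frac12},
\]
together with Weyl's law $\lambda_n\sim Cn^{2/d}$. Since $e^{-\lambda_n t_0}$ then decays faster than any power of $n$, the series is summable, and it represents $\p_\nu K_\Omega(x,p,\cdot)$ because $K_\Omega$ is smooth up to the boundary away from $t=0$ (Lemma~\ref{heat_kernel}(iii)). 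Exchanging the sum with the $s$-integral is then legitimate, because $g\in L^1(0,T_1)$ and the convergence is uniform for $t-s\ge t_0$.

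\textbf{Step 2: bound the terms.} Set $a_n=\int_0^{T_1}g(s)e^{\lambda_n s}\d s$. Then $|a_n|\le \|g\|_{L^1}e^{\lambda_n T_1}$. For complex $t$ with $\mathrm{Re}\, t\ge T_1+\delta$, the $n$-th term is entire in $t$ and is bounded by
\[
C\|g\|_{L^1}\lambda_n^{\frac d4+\frac12}e^{-\lambda_n\delta}.
\]
By Weyl's law this is summable in $n$, uniformly on that half-plane. Weierstrass' theorem then shows that the series defines a holomorphic function on $\{\mathrm{Re}\, t>T_1\}$. By Step 1 this function coincides with $\p_\nu u(x,\cdot)$ on $(T_1,\infty)$, so the latter is real analytic there.

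The main point needing care is Step 1: the interchange of summation, normal differentiation and time integration, i.e.\ that the termwise differentiated series really equals $\p_\nu K_\Omega$ uniformly for positive times. The rest is routine. If one prefers to avoid termwise normal differentiation of the series, there is an alternative route. Analyticity in $t$ holds for the heat semigroup applied to $w=u(\cdot,T_1)$, which is smooth near $\p\Omega$ and in $L^2$. Parabolic regularity then transfers analyticity to the normal trace through the bounded map $H^2\to L^2(\p\Omega)$ and the holomorphic semigroup $e^{z\Delta}$, $\mathrm{Re}\, z>0$, given that $e^{\delta\Delta}w\in D(\Delta^k)$ for all $k$.
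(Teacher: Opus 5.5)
Your proposal is correct and is essentially the paper's proof. For $t>T_1$ you write $\partial_\nu u(x,t)=\int_0^{T_1}g(s)\,\partial_\nu K_\Omega(x,p,t-s)\d s$, expand in eigenfunctions using the bounds of Lemma~\ref{lem:control}, and show that the resulting series of entire functions converges locally uniformly on $\{\Re\tau>T_1\}$, exactly as the paper does with its function $U(z,\tau)$. The termwise normal differentiation you flag is justified just as briefly in the paper (``Clearly''); any polynomial-in-$\lambda_n$ bound on $\|\varphi_n\|_{C^1(\overline{\Omega})}$ closes it, since then the eigen-series of $K_\Omega(\cdot,p,\tau)$ converges in $C^1(\overline{\Omega})$ uniformly for $\tau\ge t_0>0$.
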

\begin{proof}
Like before, choose a smooth cut-off function $\zeta(x)$ such that $0\leq \zeta \leq 1$, $\zeta|_{B(p.\varepsilon)}=0$, $\zeta|_{\Omega\backslash B(p,2\varepsilon)}=1$ and $|\nabla\zeta|\leq C$.
Then $K_{\Omega}(x,p,t)=K_p(x,t)$ for $x\in \overline{\Omega}\backslash B(p,2\varepsilon)$, cf. \eqref{eqn:Kp}, and hence the normal derivative $\partial_\nu u(z,t)$ can be expressed as
\begin{align*}
\p_{\nu}u(z,t)=\int_0^{T_1}g(s)\p_{\nu}K_{\Omega}(z,p,t-s)\mathrm{d}s,\quad  t\in(T_1,+\infty),\ z\in\partial\Omega.
\end{align*}
Clearly, we have
\[
\partial_{\nu}K_{\Omega}(z,p,t-s)=\sum_{n=1}^\infty e^{-\lambda_n(t-s)}\partial_{\nu}\varphi_n(z)\varphi_n(p)\quad z\in\partial\Omega,\ t\in(T_1,+\infty),\ s\in(0,T_1).
\]
By Lemma \ref{lem:control} and dominated convergence theorem, we obtain
\[
\partial_{\nu}u(z,t)=\sum_{n=1}^\infty \left(\int_0^{T_1}g(s)e^{-\lambda_n (t-s)}\mathrm{d}s\right)\p_{\nu}\varphi_n(z)\varphi_n(p).
\]
Now fix the set of complex number $\mathbb C_{T_1}:=\{\tau\in\mathbb C:\ \Re(\tau)>T_1\}$ and define  the map $U$  by
$$U(z,\tau):=\sum_{n=1}^\infty \left(\int_0^{T_1}g(s)e^{-\lambda_n (\tau-s)}\mathrm{d}s\right)\p_{\nu}\varphi_n(z)\varphi_n(p),\quad (z,\tau)\in \partial \Omega\times \mathbb C_{T_1}.$$
In light of Lemma \ref{lem:control}, one can easily check that for any $z\in\partial\Omega$ and any compact set $\Sigma$ of $\mathbb C$, $\Sigma\subset \mathbb C_{T_1}$, the sequence
$$\sum_{n=1}^N \left(\int_0^{T_1}g(s)e^{-\lambda_n (\tau-s)}\mathrm{d}s\right)\p_{\nu}\varphi_n(z)\varphi_n(p),\quad N\in\mathbb N$$
converges uniformly with respect to $\tau\in \Sigma$. This proves that $\tau \mapsto U(z,\tau)$ is holomorphic on $\mathbb C_{T_1}$. By combining this with the fact that $U(z,t)=\partial_{\nu}u(z,t)$, $t\in(T_1,+\infty)$, we obtain the desired result.
\end{proof}

\subsection{Heat kernel, Green's function and Poisson kernel}
For any $x\in \Omega$, there exists Green's function $G(x,y)$ satisfying
\begin{align*}
\left\{\begin{aligned}
    -\Delta_y G(x,y) &=\delta(x-y),\quad y\mbox{ in } \Omega,\\
    G(x,y)&=0,\quad y\mbox{ on }\p\Omega.
\end{aligned}\right.
\end{align*}
The function $G(x,y)$ is symmetric in $x$ and $y$, and is smooth for any $x\in \overline{\Omega},x\neq y$. Thus for any $x\in\Omega$ and $z\in\p\Omega$, we can define the Poisson kernel $P(x,z)$ to be
\[
P(x,z)=\p_{\nu}G(x,z)=\nu(z)\cdot \nabla_z G(x,z),
\]
where $\nu(z)$ is the unit outward normal vector to the boundary $\partial\Omega$ at the point $z\in \partial\Omega$.
By definition, for any harmonic function $f\in C^2(\overline{\Omega})$ and $x\in \Omega$, we have
$f(x)=\int_{\p\Omega}P(x,z)f(z)\mathrm{d}\sigma_z$.

The following important relation holds.
\begin{lemma}\label{Green_heat}
For any $x\neq y$ and $\lambda\geq 0$, define $G_\lambda(x,y)$ by
$G_{\lambda}(x,y)=\int_0^{\infty}e^{-\lambda t}K_{\Omega}(x,y,t)\mathrm{d}t.$ Then $G_{\lambda}$ is smooth when $|x-y|\geq \varepsilon$ for any $\varepsilon>0$, and solves
\begin{equation}\label{eqn:G-lam}
\left\{\begin{aligned}
    (\lambda I-\Delta_y) G_{\lambda}(x,y) &=\delta(x-y),\quad y\mbox{ in } \Omega,\\
    G_{\lambda}(x,y)&=0,\quad y\mbox{ on }\p\Omega.
\end{aligned}\right.
\end{equation}
Consequently, $\int_0^{\infty}\p_{\nu}K_{\Omega}(x,z,t)\mathrm{d}t=P(x,z)$ for $z\in \p\Omega$.
\end{lemma}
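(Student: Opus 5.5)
The plan is to work with the eigenfunction expansion of $K_\Omega$ in the Laplace variable, identify the resulting series with the resolvent kernel, and then pass to the boundary using the smoothness away from the diagonal from Lemma \ref{heat_kernel}(iv).

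\emph{Step 1: the resolvent series.} Fix $x\neq y$ and $\lambda\ge0$. By Lemma \ref{heat_kernel}(ii), $|K_\Omega(x,y,t)|\le Ct^{-d/2}e^{-C'|x-y|^2/t}$, which is integrable near $t=0$. For large $t$, Lemma \ref{lem:control} together with the eigenfunction series gives $|K_\Omega(x,y,t)|\le Ce^{-\lambda_1 t}$ for $t\ge1$. Hence $G_\lambda(x,y)$ is well defined for all $\lambda\ge0$, and in particular $G_\lambda$ is finite even at $\lambda=0$. For a test function $\psi\in C_c^\infty(\Omega)$, Fubini and the expansion of $K_\Omega$ give
\[
\int_\Omega G_\lambda(x,y)\psi(y)\d y=\sum_n\frac{(\psi,\varphi_n)}{\lambda+\lambda_n}\,\varphi_n(x)=\big((\lambda-\Delta)^{-1}\psi\big)(x).
\]
This identifies $G_\lambda(x,\cdot)$ as the Dirichlet resolvent kernel, so \eqref{eqn:G-lam} holds in the distributional sense and $G_\lambda(x,\cdot)$ has vanishing trace.

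\emph{Step 2: smoothness away from the diagonal.} Write $G_\lambda=\int_0^1+\int_1^\infty$. The tail $\int_1^\infty$ is smooth in $(x,y)$ on $\overline\Omega\times\overline\Omega$: its series, differentiated termwise, converges uniformly by Lemma \ref{lem:control} and the exponential factor $e^{-\lambda_n t}$. For the piece $\int_0^1$, Lemma \ref{heat_kernel}(iv) shows that $K_\Omega(\cdot,p,\cdot)$ extends by zero to a $C^\infty$ function on $\overline{\Omega_\varepsilon}\times[0,\infty)$. The same cut-off construction, with smooth dependence on $p$, gives smoothness jointly in the variables with $|x-y|\ge\varepsilon$. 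Integrating over the compact interval $[0,1]$ then preserves smoothness. This step also justifies differentiating under the integral sign in Step 3, and it is the main technical point; it is essentially already contained in the proof of Lemma \ref{heat_kernel}(iv).

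\emph{Step 3: consequence for the Poisson kernel.} Take $\lambda=0$. Step 1 gives $-\Delta_yG_0(x,y)=\delta(x-y)$ with zero boundary values, so uniqueness of the Dirichlet Green's function yields $G_0=G$. By Step 2, for $z\in\partial\Omega$ we may differentiate under the integral near the boundary:
\[
\int_0^\infty\p_{\nu}K_\Omega(x,z,t)\d t=\p_{\nu_z}G_0(x,z)=\p_{\nu_z}G(x,z)=P(x,z).
\]
The interchange of $\p_\nu$ and $\int_0^\infty$ is justified by dominated convergence. Near $t=0$, the function $\p_\nu K_\Omega$ is bounded by Lemma \ref{heat_kernel}(iv), since $z$ stays at distance at least $\varepsilon$ from $x$. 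For large $t$, termwise differentiation with $\|\p_\nu\varphi_n\|_{L^\infty(\partial\Omega)}\le C\lambda_n^{1/2}$ and Weyl's law gives the bound $Ce^{-\lambda_1 t}$.
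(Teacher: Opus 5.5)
Your proposal is correct, but it reaches the result by a different route from the paper.

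\textbf{The paper's route.} The paper argues abstractly. It introduces the spaces $X^{s}$ defined through the eigenexpansion and notes that $\delta(x-y)\in X^{-s}$ for $s>d/2$. It then checks that $e^{tA}$ is a $C_0$ semigroup on $X^{-s}$ whose generator has domain $X^{2-s}$. Finally it quotes Pazy's formula
$$\int_0^\infty e^{-\lambda t}e^{tA}u_0\,\mathrm{d}t=(\lambda I-A)^{-1}u_0$$
with $u_0=\delta(x-y)$. The Poisson-kernel identity is then declared immediate after setting $\lambda=0$.

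\textbf{Your route.} You pair $G_\lambda(x,\cdot)$ with $\psi\in C_c^\infty(\Omega)$ and justify Fubini and the termwise integration by hand, using the kernel bounds and the rapid decay of $(\psi,\varphi_n)$. This is more elementary. It also supplies a step the paper leaves implicit: identifying the $X^{-s}$-valued Bochner integral with the pointwise time integral that defines $G_\lambda$. Your Steps 2--3 also treat two points the paper only gestures at: smoothness off the diagonal, and moving $\partial_\nu$ inside $\int_0^\infty$. In return, the paper's argument is shorter once the semigroup machinery is granted, and it gives $(\lambda-\Delta)G_\lambda=\delta$ as an identity in $X^{-s}$, with $\delta$ taken directly as the datum.

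\textbf{Loose ends to tighten.}
\begin{itemize}
\item Pairing with $C_c^\infty(\Omega)$ test functions does not see boundary values, so the vanishing trace does not follow from ``resolvent kernel''. Read it off directly: $K_\Omega(x,z,t)=0$ for $z\in\partial\Omega$ and $t>0$, hence $G_\lambda(x,z)=0$. Continuity up to $\partial\Omega$ comes from your Step 2.
\item Lemma \ref{lem:control} bounds only $\varphi_n$, and $\partial_\nu\varphi_n$ on $\partial\Omega$. Two of your steps need more:
\begin{itemize}
\item termwise differentiation of the $\int_1^\infty$ tail;
\item the large-$t$ dominating bound when you move $\partial_\nu$ inside the integral.
\end{itemize}
Both require polynomial-in-$\lambda_n$ bounds on $\nabla\varphi_n$ (and higher derivatives) in a neighbourhood of $\partial\Omega$, not only on $\partial\Omega$. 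These follow from elliptic regularity and Sobolev embedding.
\item For the Poisson-kernel consequence, $x$ is fixed in the interior. Lemma \ref{heat_kernel}(iv) with $p=x$, plus the symmetry of $K_\Omega$, is therefore enough. Joint smoothness in $(x,y)$ ``with smooth dependence on $p$'' goes beyond that lemma. You would need to justify it separately, via parabolic regularity with parameters, if you want the full smoothness assertion of the statement.
\end{itemize}
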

\begin{proof}
Let $A=\Delta$ be the Dirichlet Laplacian on the domain $\Omega$. For $s\geq0$, we define the Hilbert space  $X^s$ by
\[
X^s=\bigg\{u\in L^2(\Omega):u=\sum_{n=1}^{\infty}(u,\varphi_n)\varphi_n,\ \sum_{n=1}^{\infty}\lambda_n^s|(u,\varphi_n)|^2<+\infty\bigg\},
\]
equipped with the norm $\|u\|_s^{2}=\sum_{n=1}^{\infty}\lambda_n^s|(u,\varphi_n)|^2$, and let $X^{-s}$ be the dual space of $X^s$. Then by Sobolev embedding, we have $\delta(x-y)\in X^{-s}$ for any $s>\frac{d}{2}$. We can define $e^{tA}$ for any $t\geq 0$ as a bounded operator from $X^{-s}$ to itself by
\[
e^{tA}u=\sum_{n=1}^\infty e^{-\lambda_nt}c_n\varphi_n,\quad u=\sum_{n= 1}^\infty c_n\varphi_n\in X^{-s},
\]
with $\|e^{tA}u\|_{-s}\leq e^{-\lambda_1t}\|u\|_{-s}$.
Now we claim that $e^{tA}$ is a $C_0$ semigroup on $X^{-s}$. In fact, by definition, it is easy to verify $e^{(t+s)A}=e^{tA}e^{sA}$.
When $t\to 0$, for any $u=\sum_{n=1}^{\infty}c_n\varphi_n$, we have
\[\|e^{tA}u-u\|_{-s}=\sum_{n= 1}^{\infty}\lambda_n^{-s}(1-e^{-\lambda_nt})^2c_n^2\to 0,
\]
by the estimate
\[
\sum_{n= 1}^{\infty}\lambda_n^{-s}(1-e^{-\lambda_nt})^2c_n^2\leq 4\sum_{n= 1}^{\infty}\lambda_n^{-s}c_n^2\leq 4\|u\|_{-s}\]
and the dominated convergence theorem. Note also that
\[
\lim_{t\to0}\frac{e^{tA}u-u}{t}=\lim_{t\to 0}\sum_{n=1}^\infty\frac{e^{-\lambda_n t}-1}{t}c_n\varphi_n=-\sum_{n=1}^\infty\lambda_n c_n\varphi_n=Au,\quad \forall u=\sum_{n= 1}^\infty c_n\varphi_n\in X^{2-s}.
\]
This implies $X^{2-s}\subset D(L)$, with $L$ being the generator of $\{e^{tA}\}_{t\geq0}$. Further, if $v=\sum_{n= 1}^\infty b_n\varphi_n\in D(L)$ is defined by
\[
v=\lim_{t\to 0}\frac{e^{tA}u-u}{t},\quad \mbox{for } u=\sum_{n= 1}^\infty c_n\varphi_n\in X^{-s},
\]
then we have
\[\left|b_n-\frac{e^{-t\lambda_n}-1}{t}c_n\right|^2\leq \left\|\frac{e^{-tA}u-u}{t}-v\right\|_{-s}\to 0\quad \mbox{as }t\to 0.\]
Thus $b_n=-\lambda_nc_n$ and $D(L)=X^{2-s}$.
In sum, $\{e^{tA}\}_{t\geq 0}$ defines a $C_0$ semigroup on $X^{-s}$ for any $s\geq 0$ with generator $(A,X^{2-s})$. By \cite[Chapter 1, Remark 5.4]{Pazy1983}, we deduce
\begin{equation*}
\int_0^\infty e^{-\lambda t}e^{tA}u_0\mathrm{d}t=(\lambda I-A)^{-1}u_0,\quad
\forall u_0\in X^{-s},  \forall \lambda> -\lambda_1.
\end{equation*}
By taking $u_0=\delta(x-y)$, we get the first statement. Note that $\p_{\nu}K_\Omega(x,z,t)\in C^{\infty}([0,T)\times \p\Omega)$. The second statement now follows by taking $\lambda=0$ and the definition of the Poisson kernel $P(x,z)$.
\end{proof}

\subsection{Dirichlet eigenfunctions on the unit ball}
In this part, we describe basic properties about the Dirichlet eigenfunctions on the unit ball $\Omega\subset \RR^d$, which will play a fundamental role in the proof of Theorem \ref{thm:ball}. The eigenfunctions are closely related to the Bessel function of the first kind. Recall that the Bessel function $J_\beta(x)$ of the first kind of order $\beta\geq 0$ is defined by
\[J_{\beta}(x)=\sum_{k=0}^{\infty}\frac{(-1)^k}{k!\Gamma(\beta+k+1)}\left(\frac{x}{2}\right)^{2k+\beta},\quad x\geq 0.\]
The function $J_{\beta}(x)$ is uniquely determined to be the solution of the ODE
\[x^2y''+xy'+(x^2-\beta^2)y=0,\quad x\geq 0\]
that is non-singular at the origin $x=0$.

The next lemma summarizes basic properties of $J_\beta(x)$.
\begin{lemma}[{\cite{Harry1967,Olver2010}}]\label{lem:Bessel}
Let $J_{\beta}(x)$ to be the Bessel function of the first kind of order $\beta\geq 0$. Then the following properties hold.
\begin{itemize}
\item [{\rm(i)}] Every $J_{\beta}(x)$ has infinitely many real roots and no complex roots. Every positive root is simple. We can list the roots increasingly and denote the $l$th positive root of $J_{\beta}(x)$ to be $\{s_{\beta,l}\}_{l=1}^\infty$. The negative roots are just $\{-s_{\beta,l}\}_{l=1}^\infty$.
        \item  [{\rm(ii)}] The following asymptotics  of $J_{\beta}(x)$ hold:
    \begin{align*} J_{\beta}(x)&\sim \frac{1}{\Gamma(\beta +1)}\left(\frac{x}{2}\right)^{\beta}\quad \mbox{as }x\to 0,\\
J_{\beta}(x) &\sim \sqrt{\frac{\pi }{2x}}\cos\left(x-\frac{\beta}{2} \pi-\frac{\pi}{4}\right)+o(1)\quad \mbox{as }x\to\infty\mbox{ is real}.
\end{align*}
\item [{\rm(iii)}] For any fixed $\beta\geq 0$, the following asymptotic formula holds
\[
s_{\beta,l}=\Big(l+\frac{\beta}{2}-\frac{1}{4}\Big)\pi+O(l^{-1})\quad \mbox{when }l\to\infty.
\]
\item [{\rm(iv)}] For any fixed $\beta\geq0$, let $\{s_{\beta,l}\}_{l=1}^\infty$ be the positive roots of $J_{\beta}(x)$. Then $\{\sqrt{x}J_{\beta}(s_{\beta,l}x)\}_{l=1}^{\infty}$ forms an orthogonal basis of $L^2(0,1)$. Any $f\in L^2(0,1)$ can be expressed as
\begin{align*} f(x)=\sum_{l= 1}^\infty c_{\beta l}\sqrt{x}J_{\beta}(s_{\beta,l}x)\quad\mbox{in }L^2(0,1),\quad \mbox{with }
c_{\beta l} =\frac{\int_0^1f(x)\sqrt{x}J_{\beta}(s_{\beta,l}x)\mathrm{d}x}{\int_0^1xJ^2_{\beta}(s_{\beta,l}x)\mathrm{d}x},
\end{align*}
and
\[\|f\|_{L^2(0,1)}^2=\sum_{l= 1}^{\infty}\left(\int_0^1f(x)\sqrt{x}J_{\beta}(s_{\beta,l}x)\mathrm{d}x\right)^2.\] Moreover, the convergence is pointwisely when $0<x<1$ and $f\in C^{\infty}([0,1])$.
\end{itemize}
\end{lemma}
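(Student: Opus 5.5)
These are classical facts, so the plan is to derive each item from the Bessel ODE and the power series, rather than redo the theory in full; for the finer asymptotics I would rely on \cite{Harry1967,Olver2010}. Throughout I would use two normal forms. The series shows that $x^{-\beta}J_\beta(x)$ is an even entire function of $x$. The Liouville substitution $w(x)=\sqrt{x}J_\beta(x)$ turns the ODE into
\[
w''+\Big(1-\frac{\beta^2-1/4}{x^2}\Big)w=0 .
\]

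\textbf{Item (i).} Evenness of $x^{-\beta}J_\beta$ gives at once that the negative roots are exactly $\{-s_{\beta,l}\}$.
\begin{itemize}
\item \emph{Simplicity.} Suppose $x_0>0$ with $J_\beta(x_0)=J_\beta'(x_0)=0$. Since $x_0$ is a regular point of the ODE, uniqueness for the initial value problem forces $J_\beta\equiv0$, which is absurd.
\item \emph{No non-real roots.} Suppose $\mu\notin\mathbb R$ with $J_\beta(\mu)=0$. Then $\bar\mu$ is also a root, since the Taylor coefficients of $x^{-\beta}J_\beta$ are real. Lommel's integral gives
\[
(\mu^2-\bar\mu^2)\int_0^1 x\,|J_\beta(\mu x)|^2\,\mathrm{d}x
\]
as a boundary term that vanishes. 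This forces $\mu^2\in\mathbb R$. Purely imaginary $\mu$ is excluded because all terms of the series for $(\mu/2)^{-\beta}J_\beta(\mu)$ are then positive.
\item \emph{Infinitely many roots.} Apply Sturm comparison to the $w$-equation with $y''+\tfrac14 y=0$. For large $x$ the coefficient exceeds $1/4$, so $w$ has a zero in every interval of length $2\pi$ there.
\end{itemize}

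\textbf{Item (ii).} The small-$x$ asymptotics is just the first term of the series. For large real $x$, I would write the $w$-equation as $w''+w=\frac{\beta^2-1/4}{x^2}w$ and apply variation of parameters on $[x,\infty)$. The perturbation is integrable, so $w(x)=A\cos(x-\phi)+O(x^{-1})$ with $A\geq 0$. To identify $A=\sqrt{2/\pi}$ and $\phi=\beta\pi/2+\pi/4$, I would use the Poisson integral representation
\[
J_\beta(x)=c_\beta\, x^\beta\int_{-1}^1(1-s^2)^{\beta-1/2}e^{ixs}\,\mathrm{d}s
\]
and apply stationary-phase (endpoint) asymptotics. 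Matching the constants is the one genuinely computational step.

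\textbf{Item (iii).} I would improve the remainder in (ii) to $w(x)=\sqrt{2/\pi}\cos(x-\phi)+O(x^{-1})$. Near each zero $x_l^0=(l+\tfrac\beta2-\tfrac14)\pi$ of the cosine, the derivative of the cosine has modulus $1$. By the intermediate value theorem, $w$ has a zero within $O(l^{-1})$ of $x_l^0$. The Sturm-type monotonicity of the zero spacing, combined with simplicity from (i), shows it is the only zero there. Counting zeros then fixes the index $l$. I expect this counting to be the main obstacle, since the offset must be pinned down exactly, for instance by comparison with $\beta=1/2$, where $J_{1/2}\propto \sin x/\sqrt{x}$, and continuity of the zeros in $\beta$.

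\textbf{Item (iv).} Consider the singular Sturm–Liouville operator
\[
Ly=-(xy')'+\frac{\beta^2}{x}y \quad \text{on } (0,1),
\]
with $y(1)=0$ and boundedness at $0$, on the weighted space $L^2((0,1),x\,\mathrm{d}x)$. Its Green's function is built from $x^\beta$ and the decaying solution, and it is square integrable, so $L^{-1}$ is compact and self-adjoint. The spectral theorem then gives an orthogonal basis of eigenfunctions $J_\beta(s_{\beta,l}x)$. Conjugating by $\sqrt{x}$ transfers this to the stated basis of $L^2(0,1)$ and yields the coefficient formula. The Parseval identity holds once the functions are normalized, that is, divided by $(\int_0^1 xJ_\beta^2(s_{\beta,l}x)\,\mathrm{d}x)^{1/2}$; as written in the statement it presupposes this normalization, and I would state that explicitly. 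For smooth $f$, pointwise convergence in $(0,1)$ follows by integrating the coefficients by parts twice using the ODE, which gives $c_{\beta l}=O(l^{-2})$ up to bounded factors. Combined with the uniform bound $|\sqrt{x}J_\beta(s_{\beta,l}x)|\le C$ from (ii), the series converges absolutely and uniformly on compact subsets, and the limit must agree with the $L^2$ limit $f$.
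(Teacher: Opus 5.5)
The paper gives no proof of this lemma; it only cites \cite{Harry1967,Olver2010}, so your sketch is being measured against the classical theory. Items (i)--(iii) and the $L^2$ part of (iv) are sound at sketch level. For the index count in (iii), make explicit that the $O(x^{-1})$ remainder is locally uniform in $\beta$, so that $J_\beta$ stays nonzero at the midpoints $(m+\frac{\beta}{2}+\frac14)\pi$ as $\beta$ is deformed to $\frac12$. There is, however, a genuine gap in the last claim of (iv), and a mismatch in (ii) that you should flag.

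\textbf{Pointwise convergence in (iv).} Your claim $c_{\beta l}=O(l^{-2})$ fails for general $f\in C^\infty([0,1])$, and the series is not absolutely convergent.
\begin{itemize}
\item \emph{Boundary term at $x=1$.} With $F=f/\sqrt{x}$ and $s=s_{\beta,l}$, integrating $\int_0^1 f\sqrt{x}J_\beta(sx)\,\mathrm{d}x$ by parts twice against $-(xy')'+\beta^2y/x=s^2xy$ leaves $-f(1)J_\beta'(s)/s$. This does not vanish: $J_\beta'(s)=-J_{\beta+1}(s)\neq0$ precisely because the zeros are simple, and $|J_{\beta+1}(s_{\beta,l})|\asymp l^{-1/2}$.
\item \emph{Endpoint $x=0$.} If $f(0)\neq0$, this endpoint contributes at the same order, since $F\sim f(0)x^{-1/2}$.
\item \emph{Resulting size of the terms.} The numerator is generically of order $l^{-3/2}$. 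The denominator $\int_0^1xJ_\beta^2(s_{\beta,l}x)\,\mathrm{d}x=\frac12J_{\beta+1}^2(s_{\beta,l})\asymp l^{-1}$ is not a bounded factor. So $|c_{\beta l}|$ is generically of order $l^{-1/2}$, and since $|\sqrt{x}J_\beta(s_{\beta,l}x)|\le Cl^{-1/2}$, the terms are of size $l^{-1}$.
\end{itemize}
The case $\beta=\frac12$, $f\equiv1$ makes this concrete. There $\sqrt{x}J_{1/2}(l\pi x)=\frac{\sqrt2}{\pi\sqrt l}\sin(l\pi x)$, and $c_{\frac12,l}=2\sqrt2\,l^{-1/2}$ for odd $l$ and $0$ for even $l$. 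The expansion is the sine series $1=\frac4\pi\sum_{l\ \mathrm{odd}}\frac{\sin(l\pi x)}{l}$, which converges on $(0,1)$ only conditionally.

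Pointwise convergence for $f\in C^\infty([0,1])$ is nonetheless true, but it needs a different ingredient. That is the classical localization/equiconvergence theory of Fourier--Bessel series (Hobson's theorem, or Young's equiconvergence with ordinary Fourier series; see Watson's treatise, Ch.~XVIII).

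Your argument does work verbatim for $f\in C_c^\infty(0,1)$. Then all boundary terms vanish, the numerators are $O(l^{-2})$ (indeed $O(l^{-2k})$ for every $k$), and the series converges absolutely and uniformly on $[0,1]$, hence to $f$ everywhere. This is the only case the paper actually uses: in Step 2 of the proof of Theorem \ref{thm:ball}, $f$ may be taken supported in a compact subset of $(0,1)$, since $0<r,\widetilde r<1$. So either restrict the claim accordingly or invoke the classical theorem.

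\textbf{The constant in (ii).} You flagged the normalization issue in the Parseval identity, and you should flag this one as well. The statement's $\sqrt{\pi/(2x)}$ is a typo for $\sqrt{2/(\pi x)}$; compare $J_{1/2}(x)=\sqrt{2/(\pi x)}\sin x$. Your $A=\sqrt{2/\pi}$ is the correct value, so you are proving the corrected formula. Read literally, with its trailing $+o(1)$, the displayed formula is vacuous, since both sides tend to $0$.
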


Now we can give the following description of Dirichlet eigenfunctions on the unit ball.
\begin{theorem}\label{thm:Dirichlet_eigenfunc}
Let $\Omega$ be the unit ball in $\mathbb{R}^d$. The $L^2(\Omega)$ normalized eigenfunctions $u_{klm}$ for the Dirichlet Laplacian can be chosen to be
    \[
    u_{klm}(r,\theta)=\omega_{kl}j_k(\alpha_{kl}r)Y_{k}^m(\theta),\quad (r,\theta)\in [0,1]\times \mathbb{S}^{d-1},
    \]
    with
    $$
    j_k(\alpha_{kl}r)=r^{1-\frac{d}{2}}J_{k+\frac{d}{2}-1}(\alpha_{kl}r)\quad \mbox{and} \quad \omega_{kl}^{-2}=\tfrac12(J_{k+\frac{d}{2}}(\alpha_{kl}))^2,$$
    and $Y_k^m$ are normalized spherical harmonics, where $k,l,m\in\mathbb{N},$ $l\geq 1$ and $1\leq m\leq d_k$ with $d_k$ given by
\[d_0=1,\quad d_1=d,\quad d_{k}=\binom{k+d-1}{k}-\binom{k+d-2}{k-1}.\]
The numbers $\{\alpha_{kl}=s_{k+\frac{d}{2}-1,l}\}_{l=1}^\infty$ are positive roots of the Bessel function $J_{k+\frac{d}{2}-1}(x)$ that are ordered increasingly.
\end{theorem}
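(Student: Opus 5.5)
We will prove the statement by separation of variables in polar coordinates $x=r\theta$, $r\in[0,1]$, $\theta\in\mathbb{S}^{d-1}$, combined with completeness arguments in the radial and angular variables. The Laplacian takes the form
\[
\Delta=\partial_r^2+\tfrac{d-1}{r}\partial_r+\tfrac{1}{r^2}\Delta_{\mathbb{S}^{d-1}}.
\]
We will use the classical fact that $L^2(\mathbb{S}^{d-1})$ has the orthogonal decomposition $\bigoplus_k\mathcal{H}_k$ into spaces of spherical harmonics of degree $k$. Here $\dim\mathcal{H}_k=d_k$, which is the number of homogeneous harmonic polynomials of degree $k$, i.e. $\binom{k+d-1}{k}-\binom{k+d-2}{k-1}$. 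Each $Y\in\mathcal{H}_k$ satisfies $-\Delta_{\mathbb{S}^{d-1}}Y=k(k+d-2)Y$. We fix an orthonormal basis $\{Y_k^m\}_{m=1}^{d_k}$ of each $\mathcal{H}_k$.

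\textbf{Step 1 (each $u_{klm}$ is an eigenfunction).} We make the ansatz $u=R(r)Y_k^m(\theta)$. This reduces $-\Delta u=\lambda u$ to the radial equation
\[
R''+\tfrac{d-1}{r}R'+\Big(\lambda-\tfrac{k(k+d-2)}{r^2}\Big)R=0 .
\]
We substitute $R(r)=r^{1-d/2}w(\alpha r)$ with $\lambda=\alpha^2$. A direct computation shows that $w$ satisfies Bessel's equation of order $\beta=k+\frac d2-1$, since $(\frac d2-1)^2+k(k+d-2)=\beta^2$. Regularity at the origin, i.e. $u\in H^1$ or boundedness near $0$, excludes the second solution. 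This forces $w=J_\beta$ by the uniqueness statement recalled before Lemma~\ref{lem:Bessel}, and then by Lemma~\ref{lem:Bessel}(ii) we have $r^{1-d/2}J_\beta(\alpha r)\sim c\,r^k$, so $u_{klm}\sim c\,r^kY_k^m$ is smooth. The Dirichlet condition $R(1)=0$ gives $\alpha=s_{\beta,l}=\alpha_{kl}$. These roots are real, positive and simple by Lemma~\ref{lem:Bessel}(i).

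\textbf{Step 2 (normalization).} We compute
\[
\|u_{klm}\|_{L^2}^2=\omega_{kl}^2\int_0^1 r^{2-d}J_\beta(\alpha_{kl}r)^2\,r^{d-1}\,\mathrm{d}r=\omega_{kl}^2\int_0^1 rJ_\beta(\alpha_{kl}r)^2\,\mathrm{d}r .
\]
We then invoke the classical Lommel integral $\int_0^1 rJ_\beta(\alpha r)^2\,\mathrm{d}r=\frac12 J_\beta'(\alpha)^2$ at a zero $\alpha$ of $J_\beta$. The recurrence $J_\beta'(x)=\frac{\beta}{x}J_\beta(x)-J_{\beta+1}(x)$ turns this into $\frac12J_{\beta+1}(\alpha)^2$. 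This matches $\omega_{kl}^{-2}=\frac12 J_{k+\frac d2}(\alpha_{kl})^2$.

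\textbf{Step 3 (completeness).} This is the step we expect to be the main obstacle. Take any $f\in L^2(\Omega)$ orthogonal to all $u_{klm}$. Expand $f(r\theta)=\sum_{k,m}f_{km}(r)Y_k^m(\theta)$, where $f_{km}\in L^2((0,1),r^{d-1}\mathrm{d}r)$. Orthogonality to $u_{klm}$ for all $l$ means that $\sqrt r\,\big(r^{\frac{d-1}{2}}f_{km}\big)\cdot r^{-1/2}$ is orthogonal in $L^2(0,1)$ to every $\sqrt r J_\beta(\alpha_{kl}r)$. Concretely, we set $h=r^{\frac{d-1}{2}}f_{km}\in L^2(0,1)$ and observe that
\[
\int_0^1 f_{km}\,r^{1-d/2}J_\beta\,r^{d-1}\,\mathrm{d}r=\int_0^1 h(r)\sqrt r\,J_\beta(\alpha_{kl}r)\,\mathrm{d}r .
\]
Lemma~\ref{lem:Bessel}(iv) then gives $h=0$. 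Hence all $f_{km}$ vanish and so $f=0$.

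Together with the orthogonality of the $u_{klm}$, which comes from the orthogonality of the $Y_k^m$ and of the radial Bessel functions, this shows that the family is an orthonormal eigenbasis. Consequently it exhausts the whole spectrum, with multiplicities accounted for by $m$. The delicate points are the justification of the spherical-harmonic decomposition and of the exclusion of the singular Bessel solution; we will cite standard references for both.
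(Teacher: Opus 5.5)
Your proposal is correct and follows essentially the paper's route: separation of variables, reduction to Bessel's equation of order $k+\frac d2-1$, and completeness via the spherical-harmonic decomposition combined with the Fourier--Bessel completeness of Lemma~\ref{lem:Bessel}(iv). Your trivial-orthogonal-complement argument is just the dual form of the paper's Parseval computation, and your Lommel-integral check of $\omega_{kl}$ makes explicit a normalization the paper only uses implicitly.

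One correction, inherited from the statement itself: $\binom{k+d-1}{k}-\binom{k+d-2}{k-1}=\binom{k+d-2}{k}$ is not $\dim\mathcal{H}_k$. For $d=2$, $k\ge 2$ it gives $1$ instead of $2$, and for $d=3$ it gives $k+1$ instead of $2k+1$. The correct count is $\binom{k+d-1}{k}-\binom{k+d-3}{k-2}$, but nothing in your argument changes if you simply set $d_k:=\dim\mathcal{H}_k$.
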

\begin{proof}
    Using the polar coordinates $(r,\xi)\in [0,\infty)\times \mathbb{S}^{d-1}$, the Laplacian of a function $f(x)=F(r)G(\xi)$ can be written as  \cite[p. 34]{Chavel1984}
    \[\Delta_{\RR^d} f=r^{1-d}\p_{r}(r^{d-1}\p_r F)G+r^{-2}F\Delta_{\mathbb{S}^{d-1}}G,\]
    where $\Delta_{\mathbb{S}^{d-1}}$ denotes the spherical Laplacian.
    Thus by means of separation of variables, we deduce that an eigenfunction $\varphi(x)=R(r)G(\xi)$ corresponding to the eigenvalue $\lambda>0$ should satisfy
    \[\Delta_{\mathbb{S}^{d-1}}G=-\mu G\quad
    \mbox{and}\quad R''+\frac{d-1}{r}R'+\left(\lambda-\frac{\mu}{r^2}\right)R=0\quad\mbox{with } R(1)=0,\]
    for some $\mu\in \RR$.
    That is, the function $G\colon \mathbb{S}^{d-1}\to \RR$ is an eigenfunction of $-\Delta_{\mathbb{S}^{d-1}}$. It is known that (see \cite[pp. 34-35]{Chavel1984} and \cite{Stein1971}) the space of harmonic polynomials which are homogeneous of degree $k$, when restricted to the sphere $\mathbb{S}^{d-1}$, constitutes the eigenspace of the $k$th distinct eigenvalue $\mu_k=k(k+d-2)$ of the spherical Laplacian $-\Delta_{\mathbb{S}^{d-1}}$. These polynomials, when restricted to $\mathbb{S}^{d-1}$ and normalized in $L^2(\mathbb{S}^{d-1})$, are precisely spherical harmonics. The multiplicity of $\mu_k$ is exactly $d_k$. The spherical harmonics form an orthonormal basis of $L^2(\mathbb{S}^{d-1})$.

    Below we denote by $\{Y_k^m\}_{m=1}^{d_k}$ an orthonormal basis of the eigenspace corresponding to $\mu_k$. By substituting $\mu_k=k(k+d-2)$, the governing equation of $R$ reads
    \[
    R''+\frac{d-1}{r}R'+\left(\lambda-\frac{k(k+d-2)}{r^2}\right)R=0,\quad \mbox{with } R(1)=0.
    \]
   Let $h(r)=r^{\frac{d}{2}-1}R(r)$. Then the function $h(r)$ satisfies
    \[h''+\frac{1}{r}h'+\left(\lambda-\frac{(k+\frac{d}{2}-1)^2}{r^2}\right)h=0,\quad\mbox{with } h(1)=0.\]
    By substituting $y=\sqrt{\lambda}r$, the function $H(y)=h(\frac{y}{\sqrt{\lambda}})$, $0\leq y\leq \sqrt{\lambda}$ satisfies the Bessel equation
    \[y^2\frac{\mathrm{d}^2H}{\mathrm{d}y^2}+y\frac{\mathrm{d}H}{\mathrm{d}y}+\left(y^2-\Big(k+\frac{d}{2}-1\Big)^2\right)H=0,\quad\mbox{with } H(\sqrt{\lambda})=0.\]
Note that all the eigenfunctions should be smooth in $\Omega$, which enforces
\[
h(r)=H(\sqrt{\lambda}r)=J_{k+\frac{d}{2}-1}(\sqrt{\lambda }r)
\]
to be the Bessel function of the first kind of order $k+\frac{d}{2}-1$. The boundary condition $H(\sqrt{\lambda})=0$ implies  that $\sqrt{\lambda}$ should be a positive root of $J_{k+\frac{d}{2}-1}(x)$, i.e., $\lambda=s^2_{k+\frac{d}{2}-1,l}$ for some $l$.

Now we claim that the sequence $\{u_{klm}\}$ forms an orthogonal basis of $L^2(\Omega)$, which will complete the proof of the lemma. Indeed, if the claim is true, then every eigenfunction $\psi$ corresponding to the eigenvalue $\lambda$ can be expanded into
\[
\psi(r,\theta)=\sum_{k,l=1}^\infty \sum_{m=1}^{d_k}c_{klm}u_{klm}(r,\theta).
\]
Note that the eigenfunction $\psi\in C^{\infty}(\overline{\Omega})$ (since the unit ball $\Omega$ is smooth). By considering
\[
-\Delta\psi=\sum_{k,l=1}^\infty \sum_{m=1}^{ d_k}\alpha^2_{kl}c_{klm}u_{klm}=\lambda\sum_{k,l=1}^\infty \sum_{m=1}^{d_k}c_{klm}u_{klm}
\] and by comparing the coefficients, we deduce $\lambda=\alpha^2_{kl}$ for some $k,l\geq 1$. Thus $\psi$ is a linear combination of finitely many $u_{klm}$ that corresponds to the eigenvalue $\alpha^2_{kl}$. Now we prove the claim. Note that
    \[\int_{\Omega}u_{klm}u_{k'l'm'}\mathrm{d}x=\omega_{kl}\omega_{k'l'}\int_0^1j_{k}(\alpha_{kl}r)j_{k'}(\alpha_{k'l'}r)r^{d-1}\mathrm{d}r\int_{\mathbb{S}^{d-1}}Y_{k}^m(\theta)Y_{k'}^{m'}(\theta)\mathrm{d}\theta.
\]
The orthogonality of spherical harmonics $Y_{k}^m$ in $L^2(\mathbb{S}^{d-1})$ implies that the integral is zero unless $k=k'$ and $m=m'$. Hence the above integral reduces to
\begin{align*}
\int_{\Omega}u_{klm}u_{k'l'm'}\mathrm{d}x&=\omega_{kl}\omega_{kl'}\int_0^1j_{k}(\alpha_{kl}r)j_{k}(\alpha_{kl'}r)r^{d-1}\mathrm{d}r\\
&=\omega_{kl}\omega_{kl'}\int_0^1J_{k+\frac{d}{2}-1}(\alpha_{kl} r)J_{k+\frac{d}{2}-1}(\alpha_{kl'} r)r\d r.
\end{align*}
By Lemma \ref{lem:Bessel} (iv), we deduce that the integral is zero unless $l=l'$. Thus the sequence  $\{u_{klm}\}$ is an orthogonal system.
For any $f\in L^2(\Omega)$, we will prove Parseval's identity, which is equivalent to the completeness. For any fixed $r$, we can write
\begin{align}\label{eqn:ckm} f(r,\theta)=\sum_{k=1}^\infty \sum_{m=1}^{d_k}c_{km}(r)Y_k^m(\theta) \quad \mbox{ in }
L^2(\mathbb{S}^{d-1}),\quad \mbox{with }
c_{km}(r)=\int_{\mathbb{S}^{d-1}}f(r,\theta)Y_k^m(\theta)\mathrm{d}\theta.
\end{align}
    Thus
\[ \int_{\mathbb{S}^{d-1}}f^2(r,\theta)\mathrm{d}\theta=\sum_{k=1}^\infty \sum_{m=1}^{ d_k}c^2_{km}(r),
\]
    and
    \[\int_{\Omega}f^{2}(x)\mathrm{d}x=\int_0^1\int_{\mathbb{S}^{d-1}} r^{d-1}f^2(r,\theta)\mathrm{d}\theta\mathrm{d}r=\sum_{k=1}^\infty\sum_{m=1}^{ d_k}\int_{0}^1r^{d-1}c^2_{km}(r)\mathrm{d}r.\]
    Since $\|c_{km}(r)r^{\frac{d}{2}-\frac{1}{2}}\|_{L^2(0,1)}\leq \|f\|_{L^2(\Omega)}$, by Lemma \ref{lem:Bessel} (iv), we obtain
    \[\int_{0}^1r^{d-1}c^2_{km}(r)\mathrm{d}r=\sum_{l=1}^{\infty}\omega_{kl}^2\left(\int_0^1r^{\frac{d}{2}}c_{km}(r)J_{k+\frac{d}{2}-1}(\alpha_{kl}r)\mathrm{d}r\right)^2.\]
By substituting the expression \eqref{eqn:ckm} of $c_{km}$, we obtain
    \[\int_{0}^1r^{d-1}c^2_{km}(r)\mathrm{d}r=\sum_{l= 1}^\infty \left(\int_{\Omega}f(r,\theta)u_{klm}(r,\theta)\mathrm{d}x\right)^2.\]
    In summary, there holds
    \[\int_{\Omega}f^{2}(x)\mathrm{d}x=\sum_{k=1}^\infty \sum_{m=1}^{ d_k}\int_{0}^1r^{d-1}c^2_{km}(r)\mathrm{d}r=\sum_{k,l=1}^\infty \sum_{m=1}^{ d_k}\left(\int_{\Omega}f(r,\theta)u_{klm}(r,\theta)\mathrm{d}x\right)^2.\]
This completes the proof of the theorem.
\end{proof}
\section{Proof of the main results}
\label{sec:proof}
In this section we prove Theorems \ref{thm:ball} and \ref{thm:compact}.

\subsection{Proof of Theorem \ref{thm:ball}}

Now we can state the proof of Theorem \ref{thm:ball}.
\begin{proof}
Let the partitions of $g$ and $\widetilde g$ be given by $I_g=\{t_1,\cdots,t_K\}$ and $I_{\widetilde{g}}=\{\widetilde{t}_1,\cdots,\widetilde{t}_{\widetilde{K}}\}$, respectively. We break the lengthy proof into three steps.

\noindent\textbf{Step 1}. At this step we derive two key identities \eqref{eqn:id-zero}--\eqref{eqn:id-first} using the eigenexpansion. Let  $\rho=\min\{t_1,\widetilde{t}_1\}$. Suppose $g=q_1$ and $\widetilde{g}=\widetilde{q}_1$ with $0\neq q_1,\widetilde{q}_1\in \mathbb{R}$, when $0<t<\rho$. Then if $t<\rho$, the identity $\p_\nu u(z_{\ell},t)=\p_\nu \widetilde{u}(z_{\ell},t)$ for $\ell=1,\ldots,d$ and the representation \eqref{eqn:flux-rep} imply
\[
q_1\int_0^t\p_{\nu}K_{\Omega}(z_{\ell},p,s)\mathrm{d}s=\widetilde{q}_1\int_0^t\p_{\nu}K_{\Omega}(z_{\ell},\widetilde{p},s)\mathrm{d}s,\quad \ell=1,\ldots,d.
\]
By taking the derivative in $t$, we get
\[
q_1\p_{\nu}K_{\Omega}(z_{\ell},p,t)=\widetilde{q}_1\p_{\nu}K_{\Omega}(z_{\ell},\widetilde{p},t),\quad \ell=1,\ldots,d.
\]
Then using the series representation \eqref{eqn:Green-domain-flux} of the heat kernel $K_\Omega(t,z,p)$, we have
\[\sum_{n=1}^\infty e^{-\lambda_nt}(q_1\varphi_n(p)-\widetilde{q}_1\varphi_n(\widetilde{p}))\p_{\nu}\varphi_n(z_{\ell})=0,\quad \ell=1,\ldots,d.
\]
Now the uniqueness of the generalized Dirichlet series implies
\begin{equation}\label{eq2}
\sum_{\lambda_n=\lambda_j}\p_{\nu}\varphi_n(z_{\ell})(q_1\varphi_n(p)-\widetilde{q}_1\varphi_n(\widetilde{p}))=0.
\end{equation}
By Theorem \ref{thm:Dirichlet_eigenfunc}, all the eigenfunctions $\varphi_n$ are of the form $\omega_{kl}j_k(\alpha_{kl}r)Y_k^m(\theta)$ for some integers $k$, $l$ and $m$, and the corresponding eigenvalue $\lambda_n=\alpha^2_{kl}$ has a multiplicity $d_k$. In particular, when $k=0$, the multiplicity $d_0$ is one  and when $k=1$, the multiplicity $d_1$ is $d$. Also, note that
    \[\p_{\nu}u_{klm}(z)=\p_ru_{klm}(r,\theta)|_{r=1}=\omega_{kl}\p_rj_k(\alpha_{kl})Y_k^{m}(\theta),\quad z=(1,\theta)\in \p\Omega.\]
Now consider the eigenspace $V_1$ corresponding to the first nonzero eigenvalue of the spherical Laplacian $-\Delta_{\mathbb{S}^{d-1}}$. We can verify that the coordinate projections $p_{m}(x)=x_m,x=(x_1,\cdots,x_d)\in \mathbb{S}^{d-1}$ for $m=1,\ldots, d$ form a basis of the space $V_1$ by the definition of spherical harmonics. Since $\{Y_1^m(\theta)\}_{m=1}^d\subset V_1$ are linearly independent, there exists an invertible matrix $M\in \mathbb{R}^{d\times d}$ such that \[[Y_1^1(x),\cdots,Y_1^d(x)]^{T}=M[p_{1}(x),\cdots,p_d(x)]^{T},\quad x\in \mathbb{S}^{d-1}.\]
When the vectors $\{z_{\ell}\}_{\ell=1}^d\subset \mathbb{S}^{d-1}$ are linearly independent, the matrix $A=[p_m(z_{\ell})]_{d\times d}$ is invertible, since the $\ell${th} row is just $z_{\ell}$. Thus for some $C_l \neq0$,
    \[\det(\p_{\nu}u_{1lm}(z_{\ell}))=C_l\det M\cdot \det A\neq 0.\]
By choosing $\lambda_j=\alpha^2_{0l}$ and $\lambda_j=\alpha^2_{1l}$ in the identity \eqref{eq2} for any $l\in \mathbb{N}$, $l\geq 1$, since the coefficient matrix $[\p_{\nu}\varphi_n(z_{\ell})]_{d\times d}=[\p_{\nu}u_{1lm}(z_{\ell})]_{d\times d}$ is invertible, we get
\begin{align}
q_1j_0(\alpha_{0l}r)&=\widetilde{q}_1j_0(\alpha_{0l}\widetilde{r}), \quad l\in\mathbb{N},\label{eqn:id-zero}\\
q_1j_1(\alpha_{1l}r)Y_1^m(\theta_p)&=\widetilde{q}_1j_1(\alpha_{1l}\widetilde{r})Y_1^m(\theta_{\widetilde{p}}),\quad l\in \mathbb{N}, m=1,\ldots,d,\label{eqn:id-first}
\end{align}
where $(r,\theta_p)$ and $(\widetilde{r},\theta_{\widetilde{p}})$ are polar coordinates for $p$ and $\widetilde{p}$, respectively.

\noindent \textbf{Step 2}. Now we prove $p=\widetilde{p}$ and $q_1=\widetilde{q}_1$. When $r,\widetilde{r}\neq 0$, we can choose a smooth function $f\colon[0,1]\to\RR$ with compact support, and an open interval $(a,b)\subset [0,1]$ such that  $f(r)=1,f(\widetilde{r})=0$ and $a<r,\widetilde{r}<b$. By Lemma \ref{lem:Bessel} (iv), the Fourier-Bessel expansion (with $\alpha_{0l}$ being roots of $J_{\frac{d}{2}-1}(x)$)
\[
x^{\frac{d}{2}-1}f(x)=\sum_{l=1}^\infty c_lJ_{\frac{d}{2}-1}(\alpha_{0l}x)
\]
converges pointwisely when $x\in(a,b)$. This implies
\[
0\neq q_1f(r)=q_1\sum_{l=1}^\infty c_lr^{1-\frac{d}{2}}J_{\frac{d}{2}-1}(\alpha_{0l}r)=q_1\sum_{l=1}^\infty c_lj_0(\alpha_{0l}r)=\widetilde{q}_1\sum_{l=1}^\infty c_lj_0(\alpha_{0l}\widetilde{r})=\widetilde{q}_1f(\widetilde{r})=0,
\]
which is however impossible, since $q_1\neq 0$ and $f(r)=1$. Thus  we deduce $r=\widetilde{r}$. Note that $j_0(\alpha_{01}r)$ and $j_1(\alpha_{11}r)$ cannot vanish since $0<r,\widetilde{r}<1$ and $\alpha_{01}$ and $\alpha_{11}$ are the smallest positive roots for $j_0$ and $j_1$, respectively. This and \eqref{eqn:id-zero} imply $q_1=\widetilde{q}_1$ and $Y_1^m(\theta)=Y_1^m(\widetilde{\theta})$. Using the identity
\[
\theta=[p_1(\theta),\cdots,p_{d}(\theta)]^{T}=M^{-1}[Y_1^1(\theta),\cdots,Y_1^{d}(\theta)]^{T},
\]
we obtain $\theta_p=\theta_{\widetilde{p}}$ and thus $p=\widetilde{p}$.  When one of $r,\widetilde{r}$ is zero, say $\widetilde{r}=0$,
    then we find $J_{\frac{d}{2}-1}(\alpha_{0l}r)=c$ is a constant which is nonzero (Lemma \ref{lem:Bessel} (ii)). This is again impossible when $r\neq 0$ since $|J_{\frac{d}{2}-1}(\alpha_{0l}r)|\to 0$ when $l\to\infty$, cf. Lemma \ref{lem:Bessel}(ii).
    In summary, we have $r=\widetilde{r}=0$. Now $q_1j_0(0)=\widetilde{q}_1j_0(0)$ implies $q_1=\widetilde{q}_1$, and thus we also obtain $p=\widetilde{p}=0$ and $q_1=\widetilde{q}_1$.

\noindent\textbf{Step 3}. At this step, we prove $g\equiv \widetilde g$. Since both $g$ and $\widetilde{g}$ are piecewise constant, $h\equiv g-\widetilde{g}$ is also piecewise constant. Let
\[
t^*=\sup\{t:h(s)=0\text{~for~}0<s<t\}.
\]
Note that $t^*>0$ by the preceding argument. Suppose $t^*\neq T$. Then there exists $\kappa>0$ such that $t^*<t^*+\kappa<T$ and $h(t)=c\neq 0$ when $t^*<t<t^*+\kappa$. Without loss of generality,  we may assume  $(t^*,t^*+\kappa)\cap (I_{g}\cup I_{\widetilde{g}})=\emptyset$. Since $p=\widetilde{p}$ and $h=0$ when $0<t<t^*$, from the flux representation \eqref{eqn:flux-rep}, we have
    \[c\int_{t^*}^t\p_{\nu}K_{\Omega}(x,p,t)\mathrm{d}t=0,\quad \forall t \in (t^*,t^*+\kappa).\]
Repeating the preceding argument yields
\[ c\sum_{\lambda_n=\lambda_j}\p_{\nu}\varphi_n(z_{\ell})\varphi_n(p)=0,\quad \ell=1,\ldots,d,
\]
which implies $\varphi_n(p)=j_0(\alpha_{0l}r)=0$ for these $\lambda_n=\alpha_{0l}^2$. Thus we find a root $\alpha_{01}r$ which is a positive root of $j_0$ and is smaller that $\alpha_{01}$. Since $j_0(0)\neq 0$, this is impossible. This contradiction implies $t^*=T$, and thus $g=\widetilde g$ holds for $0\leq t\leq T$.
\end{proof}

\subsection{Proof of Theorem \ref{thm:compact}}
The proof employs the Laplace transform $\mathcal{L}u(x,s)$ of $u$, defined by $\mathcal{L}u(x,s)=\int_0^\infty e^{-st}u(x,t)\d t$. Below we let  $\mathbb{C}_+=\{s\in\mathbb{C}: \Re(s)\geq0\}.$
\begin{lemma}\label{lem:laplace}
Under the condition of Theorem \ref{thm:compact}, the Laplace transform $\mathcal{L}u(x,s)$ of the solution $u$ of problem \eqref{eqn:heat} with the source $g(t)\delta(x-p)$ exists for all $s\in \mathbb{C}_+$ and is given by
    \[\mathcal{L}u(x,s)=\mathcal{L}g(s)\mathcal{L}K_{\Omega}(x,p,s),\quad  x\in \Omega_\varepsilon,\]
for any $\varepsilon>0$.
    Moreover, we have
    \[\mathcal{L}(\p_{\nu}u)(z,s)=\mathcal{L}g(s)\mathcal{L}(\p_{\nu}K_{\Omega})(z,p,s),\quad z\in\partial\Omega.\]
\end{lemma}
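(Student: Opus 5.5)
The plan is to apply the Laplace transform directly to the convolution representation of $u$ and of $\partial_\nu u$ and invoke the convolution theorem; what needs care is absolute convergence on all of $\mathbb{C}_+$, including the imaginary axis. On $\overline{\Omega_\varepsilon}\times[0,\infty)$ the extended solution is $u(x,t)=\int_0^t g(t-s)K_\Omega(x,p,s)\,\mathrm{d}s$, with $g$ extended by zero beyond $T_1$, and $\partial_\nu u$ is given by \eqref{eqn:flux-rep}. Since $g\in L^2(0,T)$ has compact support in $[0,T_1]$, we have $g\in L^1(0,\infty)$. Hence $\mathcal{L}g(s)=\int_0^{T_1}e^{-st}g(t)\,\mathrm{d}t$ is well defined and bounded by $\|g\|_{L^1}$ for every $s\in\mathbb{C}_+$; it is in fact entire.

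Next I will show that $t\mapsto K_\Omega(x,p,t)$, for fixed $x\in\Omega_\varepsilon$, and $t\mapsto\partial_\nu K_\Omega(z,p,t)$, for fixed $z\in\partial\Omega$, both lie in $L^1(0,\infty)$. Near $t=0$, Lemma \ref{heat_kernel}(iv) gives that both functions extend smoothly by zero to $t=0$, so they are bounded on $[0,1]$. For $t\ge1$ I will use the eigenexpansion \eqref{eqn:Green-domain-flux} together with Lemma \ref{lem:control}:
\[
|K_\Omega(x,p,t)|+|\partial_\nu K_\Omega(z,p,t)|\le C\sum_n e^{-\lambda_n t}\lambda_n^{d/2+1/2}\le C' e^{-\lambda_1 t/2},
\]
where the last bound uses Weyl's law $\lambda_n\sim Cn^{2/d}$ to sum the series uniformly for $t\ge1$. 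Thus both kernels are absolutely integrable on $(0,\infty)$, and their Laplace transforms converge absolutely on $\mathbb{C}_+$; indeed they converge on $\Re s>-\lambda_1/2$.

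With integrability in hand, I apply Fubini–Tonelli to $\int_0^\infty e^{-st}\int_0^t g(t-\tau)K(\tau)\,\mathrm{d}\tau\,\mathrm{d}t$. The absolute integrand is bounded by $|g(t-\tau)||K(\tau)|$, whose double integral equals $\|g\|_{L^1}\|K\|_{L^1}<\infty$, because $|e^{-st}|\le1$ for $\Re s\ge0$. Exchanging the order of integration and substituting $t=\tau+\sigma$ factors the integral as $\mathcal{L}g(s)\,\mathcal{L}K_\Omega(x,p,s)$. The same computation with $\partial_\nu K_\Omega(z,p,\cdot)$ in place of $K_\Omega$ gives the flux identity. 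Existence of $\mathcal{L}u$ and $\mathcal{L}(\partial_\nu u)$ follows from the same bound.

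The main obstacle is the uniform-in-$t$ integrability of $\partial_\nu K_\Omega(z,p,\cdot)$. Near $t=0$ the series representation is useless, so I must rely on the smooth extension of Lemma \ref{heat_kernel}(iv) up to the boundary. This is available because $z\in\partial\Omega$ lies at positive distance from $p$, so $z\in\overline{\Omega_\varepsilon}$ for small $\varepsilon$. For large $t$ the difficulty is controlling the eigenfunction growth, which Lemma \ref{lem:control} and Weyl's law handle. Since the extension of $u$ to $[0,\infty)$ agrees with the solution on $(0,T)$, the formula holds for the Laplace transform of that extension, which is the one used in the subsequent argument.
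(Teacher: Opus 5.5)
Your proposal is correct and follows essentially the same route as the paper: $g\in L^1$, the kernels $K_\Omega(x,p,\cdot)$ and $\partial_\nu K_\Omega(z,p,\cdot)$ lie in $L^1(0,\infty)$ (bounded near $t=0$, exponentially decaying via the eigenexpansion and Lemma \ref{lem:control}), and then Fubini/Young gives the convolution theorem for $\Re s\ge 0$. The only difference is cosmetic. Near $t=0$ you use the smooth extension of Lemma \ref{heat_kernel}(iv), where the paper uses the Gaussian bound (ii) for the interior kernel. You also apply the series decay bound only for $t\ge 1$, which is slightly more careful than the paper's claim of $Ce^{-\lambda_1 t}$ for all $t>0$.
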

\begin{proof}
By Lemma \ref{heat_kernel}, we have the following upper bound:
    \[|K_{\Omega}(x,y,t)|\leq Ct^{-\frac{d}{2}}e^{-C'\frac{|x-y|^2}{t}},\quad (x,t)\in\Omega\times(0,+\infty)\]
    for some positive constants $C$ and $C'$. Thus for $x\in \Omega_{\varepsilon}$, we have
\[
    |K_{\Omega}(x,p,t)|\leq Ct^{-\frac{d}{2}}e^{-\frac{C_{\varepsilon}}{t}},\quad t\in(0,+\infty).
\]
Also, by Lemma \ref{lem:control}, when $t>0$, we have
\[
|K_{\Omega}(x,p,t)|\leq \sum_{n= 1}^\infty e^{-\lambda_nt}|\varphi_n(x)\varphi_n(p)|\leq Ce^{-\lambda_1t},\quad x\in\Omega_\varepsilon.
\]
Thus $K_\Omega(x,p,\cdot)\in L^1(0,\infty)$ uniformly for $x\in \Omega_\varepsilon$. Since $g\in L^1(0,T)$, by Young's inequality for convolution, we deduce that
\[
\|u(x,\cdot)\|_{L^1(0,\infty)}\leq \|g\|_{L^1(0,T_1)}\|K_{\Omega}(x,p,\cdot)\|_{L^1(0,\infty)}
\]
holds uniformly in $x\in \Omega_\varepsilon$. Thus the Laplace transform $\mathcal{L}u(x,s)$ of $u$ exists for any $s\in \mathbb{C}_+$ and is given by
\[
\mathcal{L}u(x,s)=\mathcal{L}g(s)\mathcal{L}K_{\Omega}(x,p,s),\quad x\in\Omega_\varepsilon.
\]
Meanwhile, we have $\p_{\nu}K\in C^{\infty}(\p\Omega\times[0,\infty))$ and $|\p_{\nu}K_{\Omega}(x,y,t)|\leq Ce^{-\lambda_1t}$ for $t>0$. Thus, we can still derive $\p_{\nu}u(x,\cdot)\in L^1(0,\infty)$ uniformly in $x\in\partial\Omega$. Thus there holds
\[ \p_{\nu}u(z,t)=\int_0^tg(t-s)\p_{\nu}K_{\Omega}(z,p,s)\mathrm{d}s,\quad (z,t)\in\partial\Omega\times(0,+\infty),
\]
and by taking Laplace transform, we obtain the desired result.
\end{proof}

Now we can state the proof of Theorem \ref{thm:compact}.
\begin{proof}
We prove the two cases (i) and (ii) separately.\\
(i) When $g=\widetilde{g}\in \mathcal{A}_c$ is known, by Lemma \ref{lem:laplace}, for $z\in\partial\Omega$ and $s>0$, we have
\begin{align*}
\mathcal{L}(\p_{\nu}u)(z,s)=\mathcal{L}g(s)\mathcal{L}(\p_{\nu}K_{\Omega})(z,p,s)\quad
\mbox{and}\quad
\mathcal{L}(\p_{\nu}\widetilde{u})(z,s)=\mathcal{L}g(s)\mathcal{L}(\p_{\nu}K_{\Omega})(z,\widetilde{p},s).
\end{align*}
Fix $\ell\in\{1,2\}$. By Lemma \ref{lem:ana-ext}, the map $t\mapsto \partial_\nu u(z_\ell,t)$ is analytic on $(T_1,+\infty)$. Thus,  the isolated zero theorem implies that $\p_{\nu}u(z_{\ell},t)=\p_{\nu}\widetilde{u}(z_{\ell},t)$ for any $t>0$. By substituting $z=z_{\ell}$,  we obtain $\mathcal{L}(\p_{\nu}u)(z_{\ell},s)=\mathcal{L}(\p_{\nu}\widetilde{u})(z_{\ell},s)$, $s>0$. This directly yields
\[\mathcal{L}g(s)\mathcal{L}(\p_{\nu}K_{\Omega})(z_{\ell},p,s)=\mathcal{L}g(s)\mathcal{L}(\p_{\nu}K_{\Omega})(z_{\ell},\widetilde{p},s),\quad \ell=1,2,\ s>0.\]
Since $g\in L^1(0,T)$ is compactly supported and non-uniformly vanishing, the Laplace transform $\mathcal{L}g$ is holomorphic  in $ \mathbb{C}$ and non-uniformly vanishing. Then, the isolated zero theorem implies that there exists $0<\tau_1<\tau_2$ such that $\mathcal{L}g(s)\neq0$, $s\in(\tau_1,\tau_2)$. This implies
$$\mathcal{L}(\p_{\nu}K_{\Omega})(z_{\ell},p,s)=\mathcal{L}(\p_{\nu}K_{\Omega})(z_{\ell},\widetilde{p},s),
\quad \ell=1,2,\ s\in(\tau_1,\tau_2).$$
since the maps $s\mapsto \mathcal{L}(\p_{\nu}K_{\Omega})(z_{\ell},p,s)$, $s\mapsto\mathcal{L}(\p_{\nu}K_{\Omega})(z_{\ell},\widetilde{p},s)$ are analytic with respect to $s\in(0,+\infty)$ and by the isolated zero theorem, we obtain
\begin{equation}\label{eqn-flux-Lap}
\mathcal{L}(\p_{\nu}K_{\Omega})(z_{\ell},p,s)=\mathcal{L}(\p_{\nu}K_{\Omega})(z_{\ell},\widetilde{p},s),\quad \ell=1,2,\ s>0.
\end{equation}
 Thus we have for almost every $t>0$,
\[
\p_{\nu}K_{\Omega}(z_{\ell},p,t)=\p_{\nu}K_{\Omega}(z_{\ell},\widetilde{p},t),\quad \ell=1,2.
\]
Or equivalently,
\[
\sum_{n= 1}^\infty e^{-\lambda_nt}\p_{\nu}\varphi_n(z_{\ell})(\varphi_n(p)-\varphi_n(\widetilde{p}))=0
,\quad \ell=1,2.\]
Comparing this identity with \eqref{eq2} and repeating the argument in the proof  of Theorem \ref{thm:ball} yield the assertion $p=\widetilde{p}$.

\noindent(ii)   Fix $\ell\in\{1,2,3\}$.
Since $\p_{\nu}u(z_{\ell},t)=\p_{\nu}\widetilde{u}(z_{\ell},t)$ for $0< t\leq T$, by the time analyticity  of Lemma \ref{lem:ana-ext}, the identity actually holds for any $ t\in(0,+\infty)$. Thus by Lemma \ref{lem:laplace}, setting $s=0$ in the identity \eqref{eqn-flux-Lap} leads to
    \[\mathcal{L}(\p_{\nu}u)(z_{\ell},0)=\mathcal{L}g(0)\p_{\nu}(\mathcal{L}K_{\Omega})(z_{\ell},p,0),\quad \ell=1,2,3.\]
By the definition of Laplace transform and using Lemma \ref{Green_heat}, the identity implies
\[
\mathcal{L}(\p_{\nu}u)(z_{\ell},0)=\mathcal{L}g(0)\p_{\nu}\left(\int_0^{\infty}K_{\Omega}(t,x,p)\mathrm{d}t\right)=\mathcal{L}g(0)P(p,z_{\ell}),\quad \ell=1,2,3.
\]
Similarly, we have
\begin{equation*}
\mathcal{L}(\p_{\nu}\widetilde{u})(z_{\ell},0)=\mathcal{L}\widetilde{g}(0)P(\widetilde{p},z_{\ell}),\quad \ell=1,2,3.
\end{equation*}
Consequently,
\[
\mathcal{L}g(0)P(p,z_{\ell})=\mathcal{L}\widetilde{g}(0)P(\widetilde{p},z_{\ell}),\quad \ell=1,2,3.
\]
Note that $g,\widetilde{g}\in\mathcal{A}_c$ implies $\mathcal{L}g(0),\mathcal{L}\widetilde{g}(0)\neq 0$.
Since the domain $\Omega\subset \RR^2$ is bounded and simply connected with a smooth boundary $\partial\Omega$, by Kellogg-Warschawski theorem \cite[Theorem 3.5]{Pommerenke1992}, there exists a Riemann mapping $f\colon\Omega\to D$ that can be smoothly extended to a $C^1$ diffeomorphism $F\colon\overline{\Omega}\to \overline{D}$, where $D$ is the open unit disc in $\RR^2$ such that $F'(z)\neq 0$ for any $z\in \overline{\Omega}$ and $F(\p\Omega)=\p D$. The Poisson kernels $P$ on $\Omega$ and $P_0$ on $D$ satisfy the following relation:
    \[P(x,z)=P_0(F(x),F(z))\cdot |F'(z)|.\]
Recall that $ P_0(x,z)=\frac{1}{2\pi}\frac{1-|x|^2}{|x-z|^2}$.
    Thus upon letting $a=\mathcal{L}g(0)$ and $b=\mathcal{L}\widetilde{g}(0)$, we obtain
    \[a\frac{1-|F(p)|^2}{|F(p)-F(z_{\ell})|^2}=aP(p,z_{\ell})=bP(\widetilde{p},z_{\ell})=b\frac{1-|F(\widetilde{p})|^2}{|F(\widetilde{p})-F(z_{\ell})|^2}.\]
    This directly implies
    \[\frac{a(1-|F(p)|^2)}{b(1-|F(\widetilde{p})|^2)}=\frac{|F(p)-F(z_{\ell})|^2}{|F(\widetilde{p})-F(z_{\ell})|^2}.\]
    Now suppose $p\neq \widetilde{p}$. Then $F(p)\neq F(\widetilde{p})$. Consider the set
    $$A_k=\{F(z)\in \RR^2:|F(p)-F(z)|=k|F(\widetilde{p})-F(z)|\}.$$
    Let $k=\frac{a(1-|F(p)|^2)}{b(1-|F(\widetilde{p})|^2)}.$
Then all points $F(z_{\ell})$, $ \ell=1,2,3$, should be lying on $\partial D\cap A_k$. When $k>0, k\neq 1$, the set $A_k$ is a circle. When $k=1$, the set $A_k$ is a line. In both cases, we have either  Card$(\partial D\cap A_k)\leq 2$ (with Card denoting taking the cardinality of a set) or $k\neq1$ and $A_k=\partial D$. Since Card$(\{F(z_{\ell}):\ \ell=1,2,3\})=3>2$, we deduce that $k\neq1$ and $A_k=\partial D$ which is impossible when $F(p)\neq F(\widetilde{p})$. Thus, we obtain $p=\widetilde{p}$. In addition, we get
    \[\mathcal{L}g(s)\mathcal{L}(\p_{\nu}K_{\Omega})(z_{\ell},p,s)=\mathcal{L}\widetilde{g}(s)\mathcal{L}(\p_{\nu}K_{\Omega})(z_{\ell},p,s),\quad \ell=1,2,3,\ s>0.
    \]
Now we prove $\mathcal{L}(\p_{\nu}K_{\Omega})(z,p,s)<0$ for any $z\in \p\Omega$ and $s\geq 0$, which will imply $\mathcal{L}g(s)=\mathcal{L}\widetilde{g}(s)$ and thus $g=\widetilde{g}$ by the uniqueness of Laplace transform.  In fact, by Lemma \ref{Green_heat}, we have
\[
\mathcal{L}(\p_{\nu}K_{\Omega})(z,p,s)=\p_{\nu}\left(\int_{0}^{\infty}e^{-s t}K_{\Omega}(z,p,t)\mathrm{d}t\right)=\p_{\nu}G_{s}(z,p),
\]
with the function $G_s$ defined in \eqref{eqn:G-lam}.
By the expression of $G_s$, we derive $G_s(x,y)>0$ when $x,y\in \Omega$. Fix an $0<\varepsilon<\mathrm{dist}(p,\p\Omega)$ and choose $\zeta\geq0$ to be a smooth cut-off function such that $\zeta(x)=1$ when $|x-p|>\varepsilon$. Then the function $w(x)=\zeta(x)G_s(x,p)$ solves
\[
\left\{\begin{aligned}
(sI-\Delta)w&=0,\quad \mbox{in }\Omega\backslash B(p,\varepsilon),\\
w&=0,\quad \mbox{on } \p\Omega,\\
w&=\zeta(x)G_s(x,p),\quad \mbox{on }\p B(p,\varepsilon).
\end{aligned}\right.
\]
By Hopf's lemma and the fact $\p_{\nu}w(z)=\p_{\nu}G(z,p)$ for $z\in \p\Omega$, we deduce the desired assertion.
\end{proof}

\begin{remark}\label{rmk:compact}
The argument in case (ii) can also be extended to $\mathbb{R}^d$ when $\Omega\subset \mathbb{R}^d$ is the unit ball. Note that the Poisson kernel $P(x,z)$ takes the following form:
\[
P(x,z)=\frac{1}{\omega_{d-1}}\frac{1-|x|^2}{|x-z|^d}.
\]
Let
$A_k=\{z\in \mathbb{R}^d:|p-z|=k|\widetilde{p}-z|\}$.
With $(\cdot,\cdot)$ being the standard inner product on $\mathbb{R}^d$, the points in $A_k$ satisfy
$
(p-z,p-z)=k^2(\widetilde{p}-z,\widetilde{p}-z)$,
or equivalently,
\[
(1-k^2)|z|^2-(z,2p-2k^2\widetilde{p})+|p|^2-k^2|\widetilde{p}|^2=0.
\]
When $z\in \p\Omega$, we obtain
    \[A_k\cap \p\Omega=\{z:|z|=1,~(z,2p-2k^2\widetilde{p})=1+|p|^2-k^2(1+|\widetilde{p}|^2)\}.\]
    That is, $A_k\cap \p\Omega$ is the intersection of $\p\Omega$ and the hyperplane \[(z,2p-2k^2\widetilde{p})-(1+|p|^2-k^2(1+|\widetilde{p}|^2))=0.\]
Thus if we take $z_{\ell}$ such that $\ell=d+1$ and $z_{\ell}$ do not lie on the same hyperplane, we get a contradiction and conclude $p=\widetilde{p}$.
\end{remark}

\section{Numerical experiments and discussions}
\label{sec:numer}

Now we present several numerical experiments in the two-dimensional case to illustrate the feasibility of recovering the location of the point source and the time-dependent amplitude $g(t)$ from sparse boundary measurement. Throughout, we fix $T=1$. The direct problems are solved using \texttt{MATLAB} PDE Toolbox, which employs a standard Galerkin finite element method (with conforming piecewise linear elements) in space and an implicit time-stepping scheme in time \cite{Thomee:2006}. To represent the point source $\delta(x-p)$ on the finite element mesh, we employ a Gaussian approximation of the form
$\delta(x-p)\approx \frac{1}{2\pi\sigma^2}e^{-\frac{|x-p|^2}{2\sigma^2}}$,
with $\sigma=0.03$. We employ a finer space--time mesh (with a mesh size $h_f=0.02$ and 600 time steps) to generate the exact flux $\partial_\nu u(z_\ell,t)$, and a coarse mesh (with a mesh size $h_c=0.04$ and $300$ time steps) for the  reconstruction, so as to mitigate the so-called ``inverse crime''. We generate the noisy data $y^\delta$ by
\begin{equation}\label{eq:noise-model}
y^\delta(z_\ell,t_k)
= \partial_\nu u(z_\ell,t_k)\,\bigl(1 + \delta\,\xi_{k,\ell}\bigr),\quad\mbox{with }
\xi_{k,\ell}\overset{\mathrm{i.i.d.}}{\sim}\mathcal N(0,1),
\end{equation}
where $\delta>0$ denotes the relative noise level. The notation $(r,\theta)$ denotes the polar coordinate, and $(x_1,x_2)$ denotes the Cartesian coordinates.

We employ an alternating minimization algorithm for the numerical reconstruction. In the case of a constant amplitude, to update the source location $(x_1,x_2)$, we employ the regularized Gauss-Newton method, with the Jacobian approximated by the central finite difference (with $h_{x_1}=h_{x_2}=10^{-3}$) and a damping parameter $\lambda=10^{-4}$. We determine the step length by the Armijo's rule \cite[Chapter 6]{DennisSchnabel:1996} (initial step size $\alpha=1$, contraction factor $\rho=0.5$, minimum step size $\alpha_{\min}=1/16$, Armijo constant $c=10^{-4}$). For the amplitude $g$, we take an analytic update using the standard least-squares. More precisely, let $q=(x_1,x_2)$ denote the current location, and let $Y(q)\in\mathbb{R}^{(N_t+1)\times L}$ be the flux traces at all $N_t+1$  time steps and $L$ sensors when the forward problem is solved with unit strength $g\equiv1$. The optimal amplitude at $q$ is given by
$g(q) \;=\; {\langle Y(q),y^\delta\rangle}/{\|Y(q)\|_2^2}$. In the numerical experiments, the algorithm  converges rapidly  (often within five iterations). The adaptation of the  algorithm to other cases is direct. The \texttt{MATLAB} code is available at \url{https://github.com/fangyugong/point-source-identification-heat}.

The first example is to recover the location $p$ and the constant magnitude $g$ on the unit disc.
\begin{example}\label{exam:const-g}
The domain $\Omega$ is the unit disc, and the point source is located at $(r^\dag,\theta^\dag) = (0.4, 2.0)$, i.e., $(x_1^\dag,x_2^\dag) \approx (-0.1665, 0.3637)$, with a constant amplitude of $g = 2.0$. The flux is observed at two boundary points $z_\ell = e^{\mathrm{i}\theta_\ell}$ with $\theta_\ell \in \{1.7,\;2.0\}$.
\end{example}

The scheme is initialized to $(r_0, \theta_0) = (0.5, 1.8)$. We present numerical results for three noise levels (3\%, 5\%, and 10\%) in Table~\ref{tab:const-g}.
The algorithm can successfully recover the location $p$ of the source and the amplitude $g$, and as the noise level $\delta$ increases from 3\% to 10\%, the error in the estimated parameters also increases. Nonetheless, even with 10\% noise, the estimates remain close to the exact ones, showing the robustness of the method.

\begin{table}[hbt!]
    \centering
\begin{threeparttable}
    \caption{The recovery of the location $(x_1,x_2)$ and the amplitude $g$ for Example \ref{exam:const-g} with 3\%, 5\% and 10\% noise.}
    \label{tab:const-g}
    \begin{tabular}{c c c c c}
        \toprule
           & & {$x_1$} & {$x_2$} & {$g$} \\
        \midrule
        & exact & -0.1665 & 0.3637 & 2.0000 \\
        \midrule
       \multirow{ 2}{*}{3\%} & estimate & -0.1655 & 0.3645 & 1.9979 \\
       & error               & \num{9.80e-4} & \num{8.11e-4} & \num{2.13e-3} \\
        \midrule
       \multirow{ 2}{*}{5\%} & estimate & -0.1647 & 0.3648 & 1.9968 \\
       & error               & \num{1.77e-3} & \num{1.12e-3} & \num{3.24e-3} \\
        \midrule
       \multirow{ 2}{*}{10\%} & estimate & -0.1627 & 0.3657 & 1.9939 \\
       & error               & \num{3.74e-3} & \num{1.94e-3} & \num{6.11e-3} \\
        \bottomrule
    \end{tabular}
\end{threeparttable}
\end{table}

The next example is about recovering the location $(x_1,x_2)$ only, with a known time-dependent amplitude $g(t)$.
\begin{example}\label{exam:known-g}
The time-dependent amplitude $g(t) = \sin(2\pi t) + 1$ is known, and only the location $p=(r\cos\theta,r\sin\theta)$ is unknown. Consider the following two settings.
\begin{itemize}
\item[{\rm(a)}] The domain $\Omega$ is the unit disc, and there is one point source at
$(r^\dag,\theta^\dag)=(0.4,\,2.0)$, i.e., $p^\dag=(x_1^\dag,x_2^\dag)\approx(-0.1665,\,0.3637)$.
The flux is observed at two boundary points $z_\ell=e^{\mathrm{i}\theta_\ell}$ with  $\theta_\ell\in \{1.7,\;2.0\}$.
\item[{\rm(b)}]  The domain $\Omega$ is an ellipse
$\{(x_1/a)^2+(x_2/b)^2\le 1\}$ with $(a,b)=(1.2,\,0.8)$, and there is one point source at
$(r^\dag,\theta^\dag)=(0.4,\,2.0)$, i.e.,
$p^\dag=(a r^\dag\cos\theta^\dag,\ b r^\dag\sin\theta^\dag)\approx(-0.1998,\,0.2910)$.
The flux is measured at two boundary points
with angles $\theta_\ell\in \{1.7,\;2.0\}$.
\end{itemize}
\end{example}

Table \ref{tab:known-g} presents the exact and estimated locations for Example \ref{exam:known-g} at three noise levels ($\delta\in\{3\%,5\%,10\%\}$). The reconstructions for both cases (a) and (b) for up to 10\% noise in the data are fairly accurate, agreeing with the uniqueness result in Theorems \ref{thm:ball} and \ref{thm:compact}(b). Note that the accuracy of the reconstruction in cases (a) and (b) are largely comparable, and it is also worth noting that case (b) employs only two points, instead of three points as alluded by Theorem \ref{thm:compact}(ii), indicating the potential for further reducing the amount of the data.

\begin{table}[hbt!]
    \centering
\begin{threeparttable}
\caption{The recovery of the location $(x_1,x_2)$ for Example \ref{exam:known-g} with 3\%, 5\% and 10\% noise.\label{tab:known-g}}
    \begin{tabular}{c c c ccc}
        \toprule
       \multicolumn{2}{c}{} & \multicolumn{2}{c}{case (a)} &
      \multicolumn{2}{c}{case (b)} \\
      \cline{3-4} \cline{5-6}
       &  & {$x_1$} & {$x_2$} & $x_1$ & $x_2$\\
        \midrule
       &  exact & -0.1665 & 0.3637 &  -0.1998 & 0.2910 \\
        \midrule
       \multirow{ 2}{*}{3\%}
       &  estimate & -0.1673 & 0.3635 & -0.2002 & 0.2907 \\
       & error     & \num{8.50e-4} & \num{1.82e-4} & \num{4.03e-4} & \num{3.15e-4}\\
        \midrule
       \multirow{ 2}{*}{5\%}
       & estimate  & -0.1678 & 0.3633 & -0.2003 & 0.2904 \\
       & error     & \num{1.30e-3} & \num{4.68e-4} & \num{5.83e-4} & \num{6.05e-4} \\
        \midrule
      \multirow{ 2}{*}{10\%}
       & estimate  & -0.1687 & 0.3626 & -0.2008 & 0.2896 \\
       & error     & \num{2.27e-3} & \num{1.16e-3} & \num{1.04e-3} & \num{1.33e-3}\\
        \bottomrule
    \end{tabular}
\end{threeparttable}
\end{table}

Next we simultaneously recover the location and a piecewise-constant time-dependent part.
\begin{example}\label{exam:pwc-g}
The domain $\Omega$ is the unit disc, and the source is located at $(x_1^\dag,x_2^\dag)=(-0.1665,\,0.3637)$ with a piecewise-constant amplitude
$g(t)=c_1\,\chi_{t\le T_1}+c_2\,\chi_{t>T_1}$, with
$(c_{1}^\dag,c_{2}^\dag,T_{1}^\dag)
=(2.0,\,1.0,\,0.4)$. The flux is collected at
two angles $\theta_\ell\in\{1.7,\,2.0\}$.
\end{example}

The reconstruction is again based on alternating minimization. For any fixed
$q=(x_1,x_2,T_1)$, we update
$c^\star=(c_1,c_2)$ using the least-squares fitting,  and update the  variables $q$ by a regularized Gauss-Newton step with
Armijo-type backtracking.
We employ an adaptive step to approximate the derivative with respect to $T_1$. Specifically, let $\Delta t$ be the coarse time step and $\eta=\min\{T_1-t_j,\;t_{j+1}-T_1\}$ the distance from $T_1$
to the nearest grid line (with $t_j=j\Delta t$), and take the step size
$h_{T_1} \;=\; \max\bigl(1.25\,\eta,\; 2\Delta t,\; h_{\min}\bigr)$, with $h_{\min}=5\times10^{-3}$,
and set $T_1^\pm=T_1\pm h_{T_1}$ (clamped to $[0,T]$).
We investigate the example at three noise levels $\delta\in\{1\%,3\%,5\%\}$.
The boundary flux traces in Fig. \ref{fig:pwc-g} shows the monotone decrease of the misfit as the noise level $\delta$ decreases.
The final estimates and absolute
errors are given in Table \ref{tab:pwc-g}. The accuracy of the estimates is comparable with that for Examples \ref{exam:const-g} and \ref{exam:known-g}. These observations indicate the stable recovery of a piecewise constant amplitude $g(t)$ and its location $p$.

\begin{table}[hbt!]
\centering
\begin{threeparttable}
\caption{The numerical results for Example \ref{exam:pwc-g} with $1\%$, $3\%$ and $5\%$ noise.\label{tab:pwc-g}}
\begin{tabular}{cc c c c c c}
\toprule
& & {$x_1$} & {$x_2$} & {$T_1$} & {$c_1$} & {$c_2$} \\
\midrule
& exact & -0.1665 & 0.3637 & 0.4000 & 2.0000 & 1.0000 \\
\midrule
\multirow{ 2}{*}{1\%} & estimate & -0.1666 & 0.3646 & 0.4001 & 1.9954 & 0.9989 \\
& error                & \num{1.16e-4} & \num{8.93e-4} & \num{8.83e-5} & \num{4.63e-3} & \num{1.12e-3} \\
\midrule
\multirow{ 2}{*}{3\%} & estimate & -0.1665 & 0.3656 & 0.4003 & 1.9888 & 0.9976 \\
& error                & \num{6.15e-6} & \num{1.84e-3} & \num{2.67e-4} & \num{1.12e-2} & \num{2.37e-3} \\
\midrule
\multirow{ 2}{*}{5\%} & estimate & -0.1664 & 0.3666 & 0.4006 & 1.9815 & 0.9961 \\
& error                & \num{9.77e-5} & \num{2.88e-3} & \num{5.93e-4} & \num{1.85e-2} & \num{3.91e-3} \\
\bottomrule
\end{tabular}
\end{threeparttable}
\end{table}
\begin{figure}[hbt!]
\centering
\setlength{\tabcolsep}{0pt}
\begin{tabular}{@{}c@{\hspace{1.5em}}c@{}}
\includegraphics[width=0.42\textwidth]{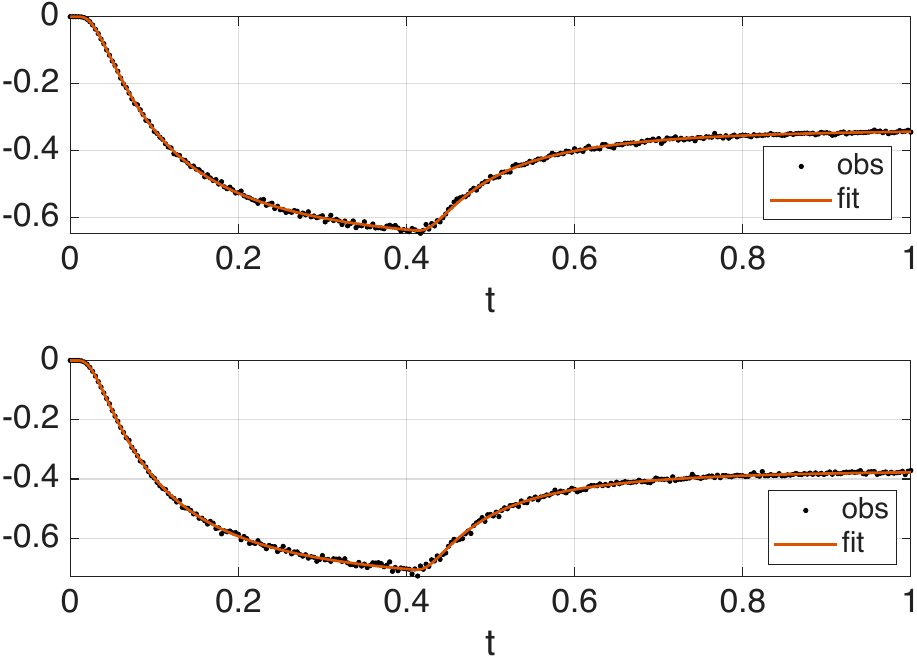}
&\includegraphics[width=0.3\textwidth]{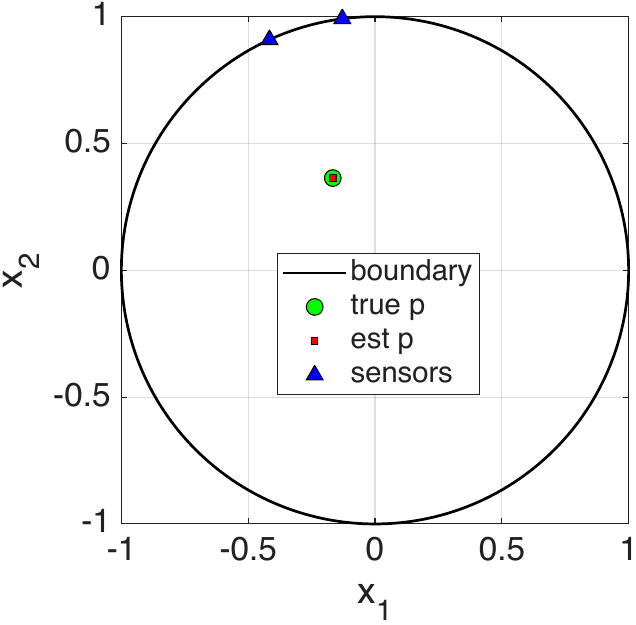}\\[1.5em] \includegraphics[width=0.42\textwidth]{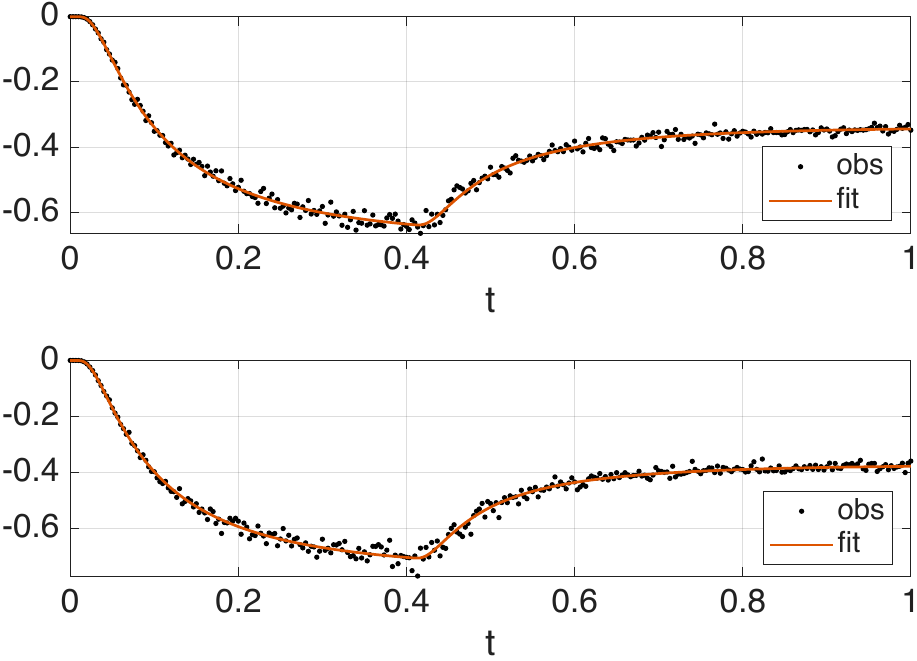}
&\includegraphics[width=0.3\textwidth]{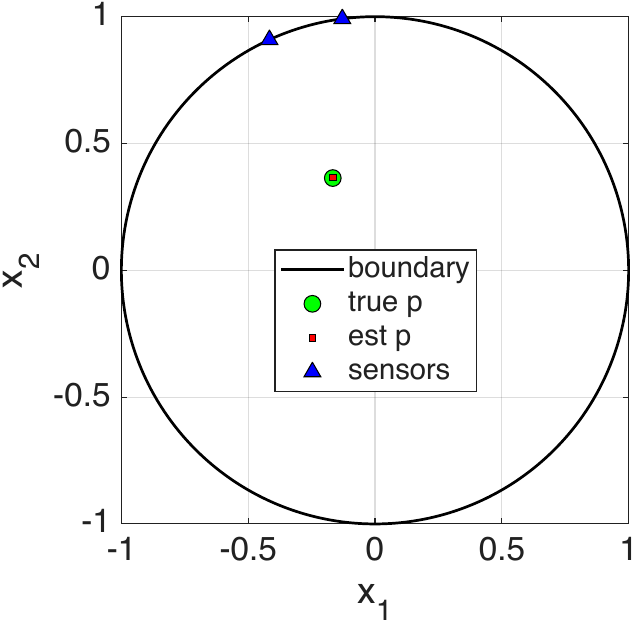}\\[1.5em]
\includegraphics[width=0.42\textwidth]{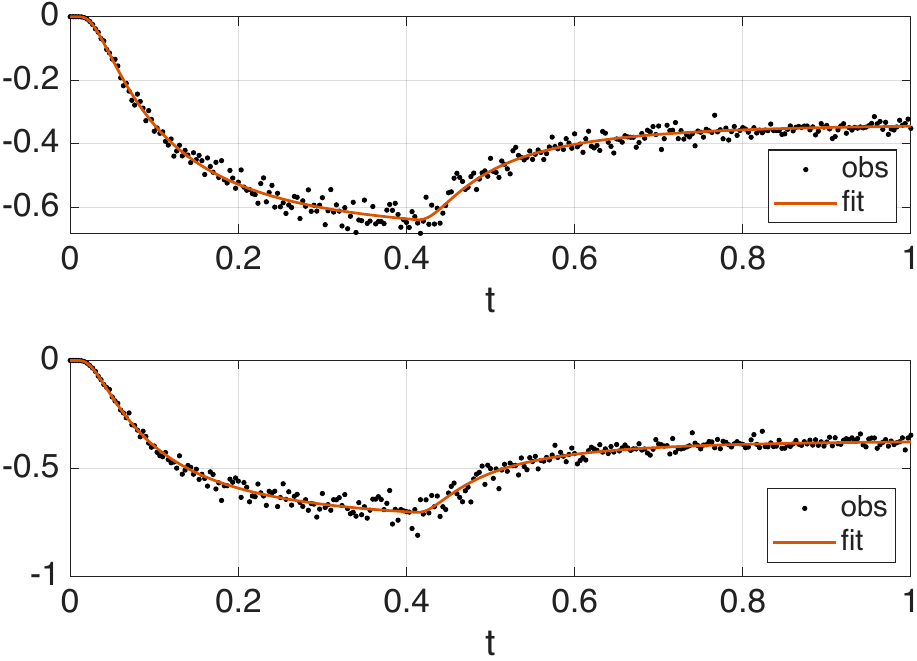}&
\includegraphics[width=0.3\textwidth]{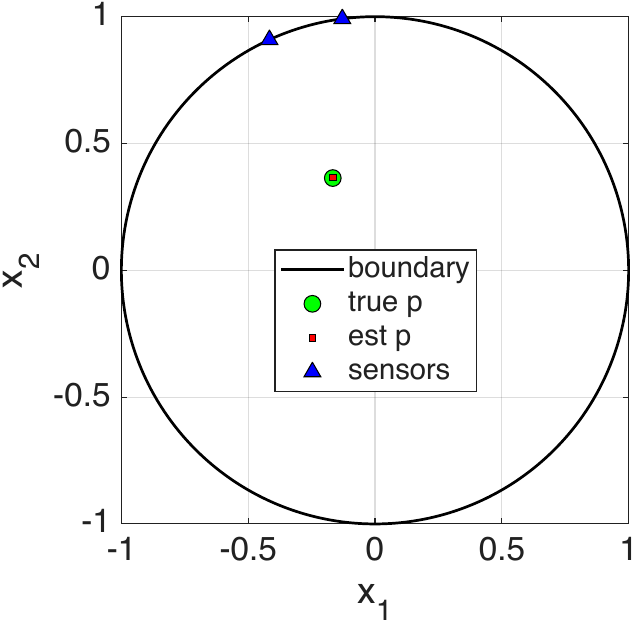}\\
flux trace & recovered location
\end{tabular}
\caption{The flux traces and recovered location for Example \ref{exam:pwc-g} with $\delta=1\%$ (top), $\delta =3\%$ (middle) and $\delta=5\%$ (bottom).}
\label{fig:pwc-g}
\end{figure}

The last example is about recovering the location and time-dependent part $g(t)$ on an ellipse.
\begin{example}\label{exam:cpt-g}
Let \(
\Omega=\{(x_1/a)^2+(x_2/b)^2\le 1\}
\)
with $(a,b)=(1.2,0.8)$, and the source $F(x,t)= g(t)\,\delta(x-p)$,
with an unknown location $p$ and time-dependent amplitude $g(t)$. The ground truth $(r^\dag ,\theta^\dag )=(0.4,\,2.0)$, and a compactly supported Hann window (``half-sine squared'')
$g^\dag (t)=2\mathbf{1}_{[0,T_1]}(t)\,\sin^2\!\bigl(\pi t/T_1\bigr)$ with $T_1=0.5$. The flux is observed at three boundary points $z_\ell=(a\cos\theta_\ell,\ b\sin\theta_\ell)$ with
$\theta_\ell \in \{\pi/6,\ 1.1\pi/2,\ 5\pi/6\}$.
\end{example}

For the reconstruction, we parameterize the amplitude $g(t)$ as
$g(t) \approx\sum_{k=1}^{K} c_k\,\phi_k(t)$,
with $K$ piecewise-linear (hat) bases $\{\phi_k\}_{k=1}^K$ on the interval $[0,T]$ ($K=20$ in the experiments). Given the location $p=(x_1,x_2)$, we determine the coefficients $(c_k)$ by a least-squares fitting. Given the amplitude $g(t)$,
the location $p=(x_1,x_2)$ is updated using the regularized Gauss-Newton method, with an initial guess $(r_0,\theta_0)=(0.5,\,1.8)$. The numerical results for the example at three noise levels $\delta\in\{1\%,3\%,5\%\}$ are shown in Table \ref{tab:cpt-g} and Fig. \ref{fig:ellipse-cpt-g}.
Table~\ref{tab:cpt-g} reports the estimates and absolute errors for
the location $p=(x_1,x_2)$, and the relative $L^2(0,T)$ error
$e(g)=\| g-g^\dag\|_{L^2(0,T)}/\|g^\dag\|_{L^2(0,T)}$ of the recovered $ g$.
For all three noise levels, the algorithm  converges in a few outer steps with steady reduction of the misfit.
Despite the big challenge associated with estimating the entire time profile $g(t)$, the errors of the recovered locations remain at the level of $10^{-3}$,
and the recovered $g$ attains a small relative $L^2(0,T)$ error between about $1.2\%$ and $3.0\%$.
In Fig.~\ref{fig:ellipse-cpt-g}, we present the flux traces (observations versus fits), recovered location and time-dependent amplitude $g(t)$.

\begin{table}[hbt!]
  \centering
  \begin{threeparttable}
  \caption{The numerical results for Example \ref{exam:cpt-g} with three noise levels $1\%$, $3\%$ and $5\%$.}
  \label{tab:cpt-g}
  \begin{tabular}{cc c c c}
    \toprule
  &  & {$x_1$} & {$x_2$} & {$e(g)$} \\
    \midrule
   & exact & -0.1998 & 0.2910 & --- \\
    \midrule
   \multirow{ 2}{*}{1\%}
    & estimate & -0.1999 & 0.2906 & \num{1.16e-2} \\
    & error    & \num{1.91e-4} & \num{3.28e-4} & --- \\
    \midrule
   \multirow{ 2}{*}{3\%}
    & estimate & -0.2008 & 0.2903 & \num{1.94e-2} \\
    & error    & \num{1.03e-3} & \num{6.41e-4} & --- \\
    \midrule
   \multirow{ 2}{*}{5\%}
    & estimate & -0.2037 & 0.2911 & \num{2.97e-2} \\
    & error    & \num{3.93e-3} & \num{1.58e-4} & --- \\
    \bottomrule
  \end{tabular}
\end{threeparttable}
\end{table}

\begin{figure}[hbt!]
  \centering
\setlength{\tabcolsep}{0pt}
\begin{tabular}{@{}c@{\hspace{.5em}}c@{\hspace{.5em}}c@{}}
\includegraphics[width=0.34\textwidth]{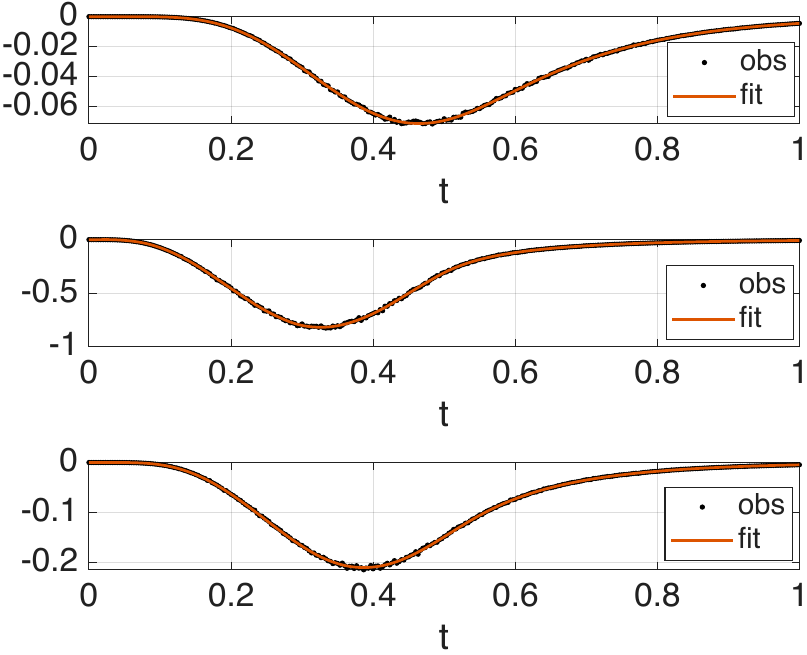}&
\includegraphics[width=0.32\textwidth]{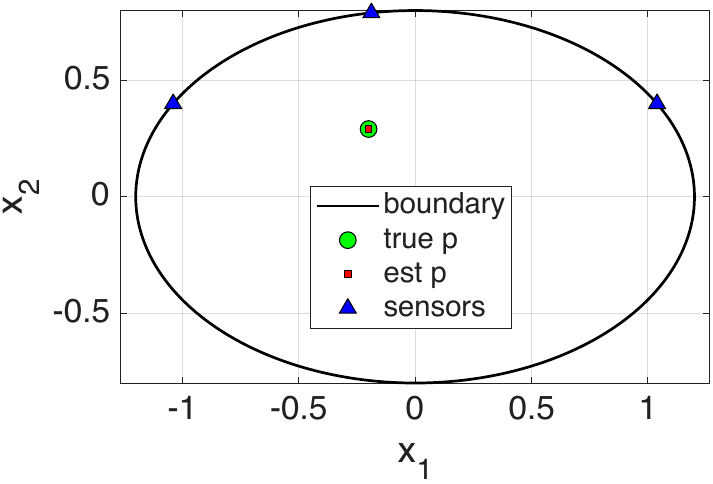}&
\includegraphics[width=0.30\textwidth]{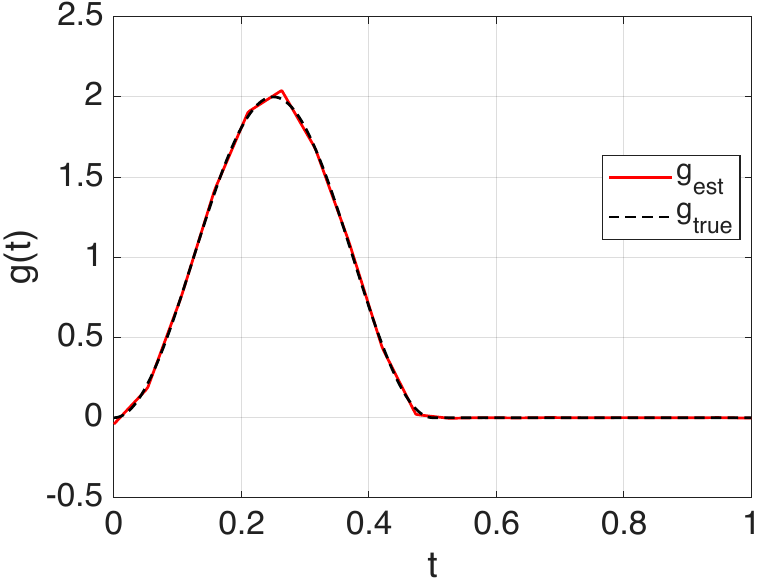}\\[1.5em]
\includegraphics[width=0.34\textwidth]{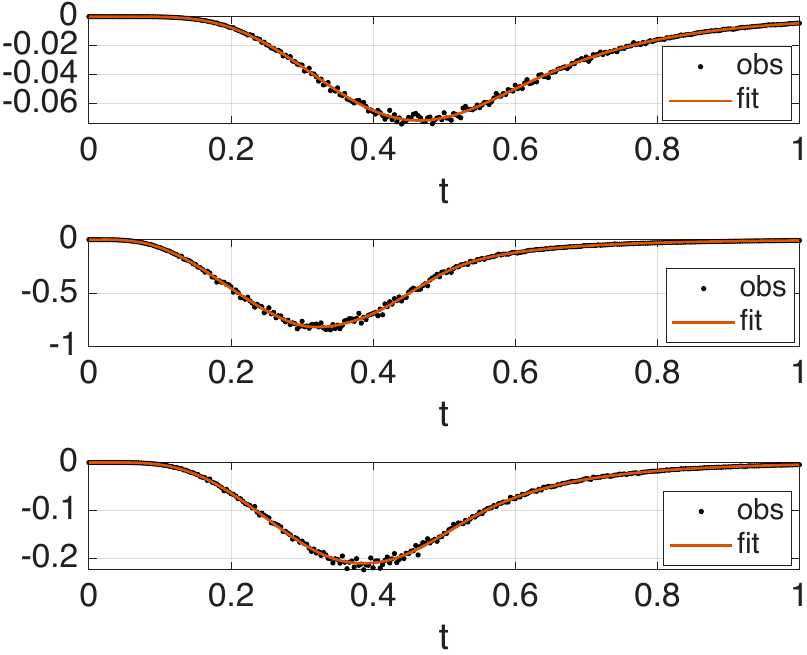}&  \includegraphics[width=0.32\textwidth]{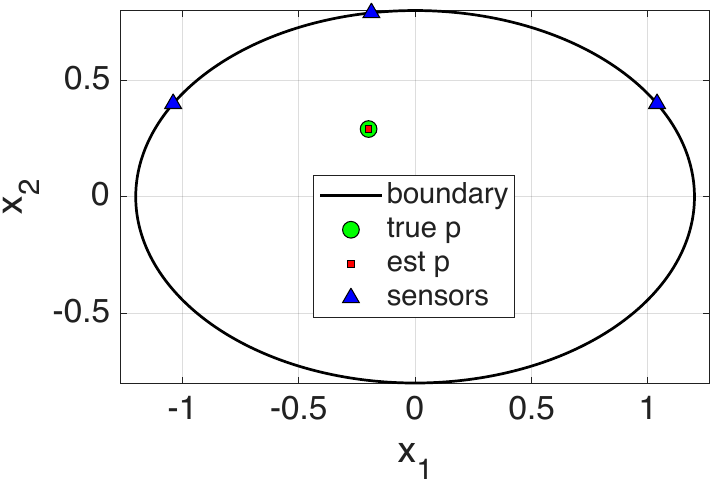}&
\includegraphics[width=0.30\textwidth]{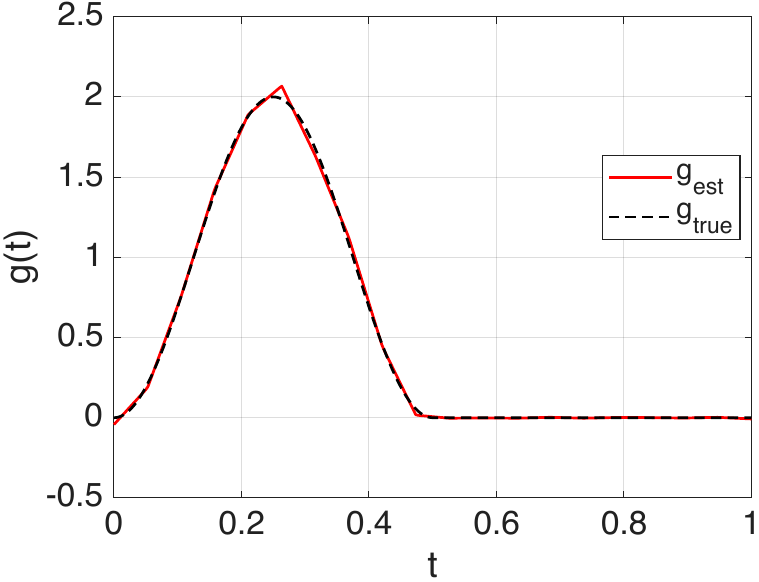}\\[1.5em]
\includegraphics[width=0.34\textwidth]{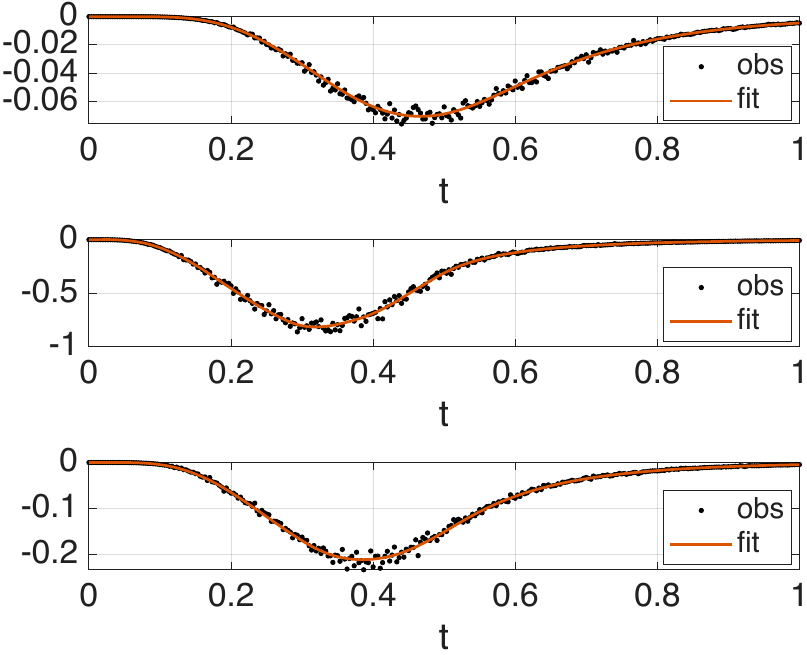}&
\includegraphics[width=0.32\textwidth]{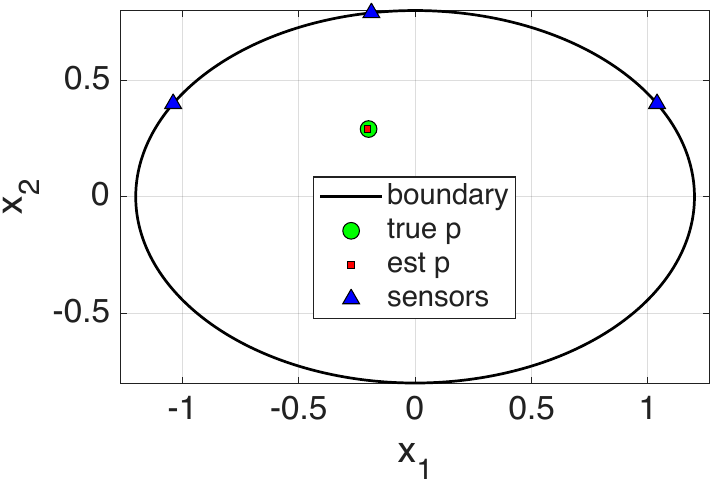}
&\includegraphics[width=0.30\textwidth]{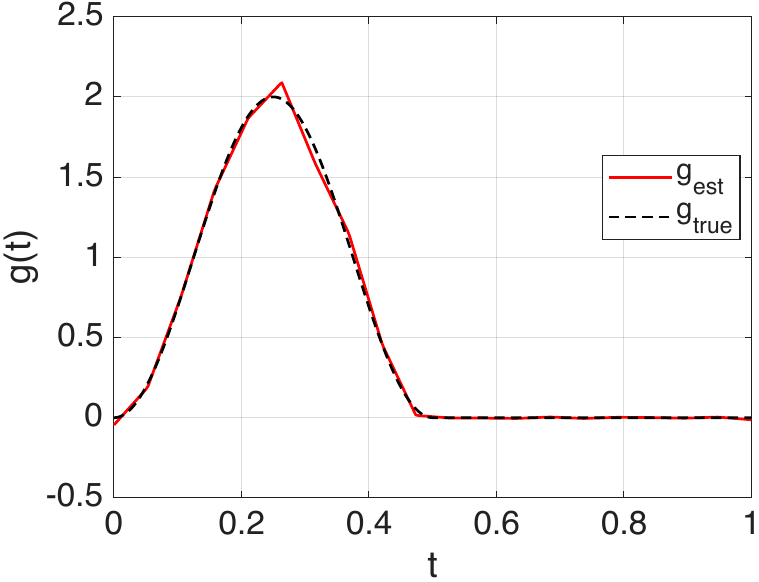}\\
flux traces & location & strength $g(t)$
\end{tabular}
\caption{The numerical results for Example \ref{exam:cpt-g} with $\delta=1\%$ (top), $\delta=3\%$ (middle) and $\delta=5\%$ (bottom).}
\label{fig:ellipse-cpt-g}
\end{figure}

\bibliographystyle{siam}
\bibliography{references}
\end{document}